\numberwithin{equation}{section}
\numberwithin{figure}{section}
\theoremstyle{plain}
\newtheorem{thm}{\protect\theoremname}[section]
  \theoremstyle{plain}
  \newtheorem{conjecture}[thm]{\protect\conjecturename}
  \theoremstyle{definition}
  \newtheorem{defn}[thm]{\protect\definitionname}
  \theoremstyle{remark}
  \newtheorem{rem}[thm]{\protect\remarkname}
  \theoremstyle{plain}
  \newtheorem{prop}[thm]{\protect\propositionname}
  \theoremstyle{plain}
  \newtheorem{lem}[thm]{\protect\lemmaname}
  \theoremstyle{plain}
  \newtheorem{cor}[thm]{\protect\corollaryname}
  \theoremstyle{definition}
  \newtheorem{example}[thm]{\protect\examplename}
  \providecommand{\conjecturename}{Conjecture}
  \providecommand{\corollaryname}{Corollary}
  \providecommand{\definitionname}{Definition}
  \providecommand{\examplename}{Example}
  \providecommand{\lemmaname}{Lemma}
  \providecommand{\propositionname}{Proposition}
  \providecommand{\remarkname}{Remark}
\providecommand{\theoremname}{Theorem}
\begin{document}

\title[Mass formulas and quotient singularities]{Mass formulas for local Galois representations and quotient singularities
I: a comparison of counting functions}

\author{Melanie Machett Wood}

\address{Department of Mathematics, University of Wisconsin-Madison, 480 Lincoln
Drive, Madison, WI 53705 USA, and American Institute of Mathematics,
360 Portage Ave, Palo Alto, CA 94306-2244 USA}

\email{mmwood@math.wisc.edu}

\author{Takehiko Yasuda}

\address{Department of Mathematics, Graduate School of Science, Osaka University,
Toyonaka, Osaka 560-0043, Japan}

\email{takehikoyasuda@math.sci.osaka-u.ac.jp}

\maketitle
\global\long\def\AA{\mathbb{A}}
\global\long\def\PP{\mathbb{P}}
\global\long\def\NN{\mathbb{N}}
\global\long\def\GG{\mathbb{G}}
\global\long\def\ZZ{\mathbb{Z}}
\global\long\def\QQ{\mathbb{Q}}
\global\long\def\CC{\mathbb{C}}
\global\long\def\FF{\mathbb{F}}
\global\long\def\LL{\mathbb{L}}
\global\long\def\RR{\mathbb{R}}

\global\long\def\bx{\mathbf{x}}
\global\long\def\bf{\mathbf{f}}
\global\long\def\ba{\mathbf{a}}
\global\long\def\bs{\mathbf{s}}
\global\long\def\bt{\mathbf{t}}
\global\long\def\bw{\mathbf{w}}
\global\long\def\bb{\mathbf{b}}
\global\long\def\bv{\mathbf{v}}
\global\long\def\bp{\mathbf{p}}

\global\long\def\cN{\mathcal{N}}
\global\long\def\cW{\mathcal{W}}
\global\long\def\cY{\mathcal{Y}}
\global\long\def\cM{\mathcal{M}}
\global\long\def\cF{\mathcal{F}}
\global\long\def\cX{\mathcal{X}}
\global\long\def\cE{\mathcal{E}}
\global\long\def\cJ{\mathcal{J}}
\global\long\def\cO{\mathcal{O}}
\global\long\def\cD{\mathcal{D}}
\global\long\def\cZ{\mathcal{Z}}
\global\long\def\cR{\mathcal{R}}
\global\long\def\cC{\mathcal{C}}
\global\long\def\cG{\mathcal{G}}

\global\long\def\fs{\mathfrak{s}}
\global\long\def\fp{\mathfrak{p}}
\global\long\def\fm{\mathfrak{m}}

\global\long\def\Spec{\mathrm{Spec}\,}
\global\long\def\Hom{\mathrm{Hom}}

\global\long\def\Var{\mathrm{Var}}
\global\long\def\Gal{\mathrm{Gal}}
\global\long\def\Jac{\mathrm{Jac}}
\global\long\def\Ker{\mathrm{Ker}}
\global\long\def\Im{\mathrm{Im}}
\global\long\def\Aut{\mathrm{Aut}}
\global\long\def\st{\mathrm{st}}
\global\long\def\diag{\mathrm{diag}}
\global\long\def\characteristic{\mathrm{char}}
\global\long\def\tors{\mathrm{tors}}
\global\long\def\sing{\mathrm{sing}}
\global\long\def\red{\mathrm{red}}
\global\long\def\Ind{\mathrm{Ind}}
\global\long\def\nr{\mathrm{nr}}
 \global\long\def\univ{\mathrm{univ}}
\global\long\def\length{\mathrm{length}}
\global\long\def\sm{\mathrm{sm}}
\global\long\def\top{\mathrm{top}}
\global\long\def\rank{\mathrm{rank}}
\global\long\def\Mot{\mathrm{Mot}}
\global\long\def\age{\mathrm{age}\,}
\global\long\def\et{\mathrm{et}}
\global\long\def\hom{\mathrm{hom}}
\global\long\def\tor{\mathrm{tor}}

\global\long\def\Conj#1{\mathrm{Conj}(#1)}
\global\long\def\Mass#1{\mathrm{Mass}(#1)}
\global\long\def\Inn#1{\mathrm{Inn}(#1)}
\global\long\def\bConj#1{\mathbf{Conj}(#1)}
\global\long\def\Hilb{\mathrm{Hilb}}
\global\long\def\sep{\mathrm{sep}}
\global\long\def\GL#1#2{\mathrm{GL}_{#1}(#2)}
\global\long\def\codim{\mathrm{codim}}

\begin{abstract}
We study a relation between the Artin conductor and the weight coming
from the motivic integration over wild Deligne-Mumford stacks. As
an application, we prove some version of the McKay correspondence,
which relates Bhargava's mass formula for extensions of a local field
and the Hilbert scheme of points.
\end{abstract}

\section{Introduction}

The paper was motivated by an observation that two formulas appearing
in very different subjects look quite similar. One is Bhargava's mass
formula \cite{MR2354798} for a weighted count of \'etale algebras
(up to isomorphism) of fixed degree over a given local field. The
other is an explicit formula for a generating function concerning
the Hilbert scheme of points, which is essentially due to Ellingsrud
and Str\o mme \cite{MR870732}. In these two formulas, similar polynomials
appear and both have partition numbers as coefficients. 

According to Kedlaya \cite{MR2354797}, Bhargava's formula is interpreted
as a formula about counting local Galois representations. Let $K$
be a local field with residue field $\FF_{q}$ and $G_{K}$ its absolute
Galois group. Now the formula becomes of the form,
\begin{equation}
\frac{1}{n!}\cdot\sum_{\rho\in\Hom_{\text{cont.}}(G_{K},S_{n})}q^{-\ba(\rho)}=\sum_{m=0}^{n-1}P(n,n-m)\cdot q^{-m}.\label{eq:mass formula intro}
\end{equation}
Here $\rho$ runs over continuous homomorphisms $G_{K}\to S_{n}$
and $\ba(\rho)$ is the Artin conductor of the induced representation
$G_{K}\to\GL n{\CC}$. Then, as in \cite{MR2354797,MR2411405}, it
is natural to look for formulas when the group $S_{n}$ and the function
$-\ba$ in the left hand side are replaced with something else.

Let $H$ be the Hilbert scheme of $n$ points of $\AA_{\FF_{q}}^{2}$
and $Z\subset H$ the locus parametrizing subschemes supported at
the origin. The cell decomposition of $Z$ in \cite{MR870732,MR2492446}
shows that 
\begin{equation}
\sharp Z(\FF_{q})=\sum_{m=0}^{n-1}P(n,n-m)\cdot q^{m}.\label{eq:hilb formula intro}
\end{equation}
One finds that the right hand side is the same as that of (\ref{eq:mass formula intro})
except signs of exponents of $q$.

The aim of this paper is to make clearer the relation between formulas
(\ref{eq:mass formula intro}) and (\ref{eq:hilb formula intro}).
The key ingredient is the wild McKay correspondence studied in \cite{MR3230848,Yasuda:2013fk}.
Let $K$ be a local field with residue field $\kappa=\FF_{q}$, $\cO_{K}$
its integer ring and $X:=\AA_{\cO_{K}}^{n}/\Gamma$ the quotient scheme
of $\AA_{\cO_{K}}^{n}$ by an $\cO_{K}$-linear faithful action of
a finite group $\Gamma$ without pseudo-reflections. Suppose that
there exists a crepant resolution $Y\to X$ and let $Z\subset Y$
be the preimage of the origin of $X_{\kappa}:=X\otimes_{\cO_{K}}\kappa$.
\[
\xymatrix{ &  & \AA_{\cO_{K}}^{n}\ar[d]^{/\Gamma}\\
Z\subset Y\ar[rr]^{\text{crep. res.}} &  & 0\in X
}
\]
In this setting, we will pose the following variant of what was conjectured
in \cite{Yasuda:2013fk} in the motivic context and in the case of
algebraically closed residue field:
\begin{conjecture}[Conjecture \ref{conj: McKay points}]
We have
\[
\sharp Z(\FF_{q})=\frac{1}{\sharp\Gamma}\cdot\sum_{\rho\in\Hom_{\mathrm{cont.}}(G_{K},\Gamma)}q^{\bw(\rho)}.
\]
Here $\bw(\rho)$ is the weight of $\rho$ (for details, see Section
\ref{sec:Weights}).
\end{conjecture}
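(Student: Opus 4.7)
The plan is to derive the conjecture from the wild McKay correspondence of \cite{MR3230848,Yasuda:2013fk}, specialized from its motivic formulation over an algebraically closed residue field to $\FF_{q}$-point counts. The bridge is the $p$-adic measure on $Y(\cO_{K})$ and the motivic measure on twisted $\cO_{K}$-arcs of the quotient stack $[\AA_{\cO_{K}}^{n}/\Gamma]$, of which $X$ is the coarse moduli.

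Since $Y\to\Spec\cO_{K}$ is smooth of relative dimension $n$, the $p$-adic measure $\mu_{Y}$ on $Y(\cO_{K})$ pushes forward under the reduction map $Y(\cO_{K})\to Y(\FF_{q})$ so that each point-fiber has measure $q^{-n}$. Hence
\[
\sharp Z(\FF_{q})=q^{n}\cdot\mu_{Y}\bigl(\{y\in Y(\cO_{K}):\bar{y}\in Z(\FF_{q})\}\bigr).
\]
Because $Y\to X$ is a proper birational crepant resolution, the change-of-variables formula for motivic integration transports this to the corresponding measure over $X(\cO_{K})$ with \emph{no} relative-Jacobian correction, by crepancy. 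The resulting quantity is the measure of the set of $\cO_{K}$-points of $X$ whose closed-point image is the origin of $X_{\kappa}$.

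To compute the $X$-side, I lift to the quotient stack $[\AA_{\cO_{K}}^{n}/\Gamma]$. A twisted $\cO_{K}$-arc of this stack consists of a $\Gamma$-torsor $E\to\Spec\cO_{K}$, equivalently a continuous $\rho\colon G_{K}\to\Gamma$, together with a $\Gamma$-equivariant $\cO_{E}$-point of $\AA^{n}$. Grouping twisted arcs by their underlying $\rho$ and measuring those whose image reduces to the origin, Yasuda's wild McKay framework predicts a contribution of $|\Gamma|^{-1}\cdot q^{\bw(\rho)-n}$ per $\rho$: the exponent $\bw(\rho)$ encodes the age/discrepancy of the $\Gamma$-action twisted by $\rho$, and the factor $1/|\Gamma|$ comes from the stack quotient. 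Summing over $\rho$ and multiplying by $q^{n}$ yields
\[
\sharp Z(\FF_{q})=\frac{1}{|\Gamma|}\sum_{\rho}q^{\bw(\rho)},
\]
as claimed.

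The main obstacle will be rendering the wild McKay correspondence rigorous in this arithmetic, non-algebraically-closed residue field setting and in its $\FF_{q}$-point-counting specialization rather than its Grothendieck-ring-valued motivic form. Three subtleties must be addressed: (i) $\Gal(\overline{\FF_{q}}/\FF_{q})$-equivariance of the twisted-arc decomposition, so the motivic identity specializes cleanly to counts over $\FF_{q}$; (ii) matching the measure-theoretic weight extracted from the stacky integral with the weight $\bw(\rho)$ defined independently via ramification data in Section \ref{sec:Weights}; and (iii) convergence of $\sum_{\rho}q^{\bw(\rho)}$, since there are infinitely many $\rho$ but (one hopes) only finitely many of each bounded weight. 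Of these, (ii) carries the real content, reducing to a careful comparison between the age function of the $\Gamma$-action twisted by $\rho$ and the Artin-conductor-type invariant that defines $\bw(\rho)$.
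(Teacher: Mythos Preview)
The statement you are attempting to prove is a \emph{conjecture} in the paper, not a theorem; the paper does not prove it in general. Your sketch is not a proof but precisely the heuristic derivation that motivated the conjecture in the first place. As the paper itself remarks (see the discussion around Conjecture~\ref{conj: McKay motivic}), the equality is expected to follow from a change-of-variables formula for the morphism $[\AA_{\cO_{K}}^{n}/\Gamma]\to X$, but that transformation rule is \emph{itself conjectural} in the wild case. So when you write ``Yasuda's wild McKay framework predicts a contribution of $|\Gamma|^{-1}q^{\bw(\rho)-n}$ per $\rho$'', you are invoking exactly the unproven ingredient; your subtlety (ii) is not a technical matter to be addressed but the open problem itself. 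The obstacles you list at the end are not wrinkles in an otherwise complete argument---they are the reason the statement is labeled a conjecture.

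What the paper actually does is prove the conjecture in two special cases by methods that \emph{avoid} the conjectural transformation rule entirely. In the tame equal-characteristic case (Corollary~\ref{cor: point tame 2}), the rigorous tame transformation rule of \cite{MR2271984} is available, and the right-hand side is computed via the explicit presentation of the tame quotient of $G_{K}$. In the Hilbert scheme case (Theorem~\ref{thm: Hilb}), no motivic integration appears at all: the left side $\sharp Z(\FF_{q})$ is computed from the Ellingsrud--Str{\o}mme cell decomposition, the right side $M(K,S_{n},-\bw_{2\sigma})$ is computed from Bhargava's mass formula combined with the comparison $\bw_{2\sigma}=2\bt_{\sigma}-\ba_{\sigma}$ for permutation representations, and both sides are seen to equal $\sum_{m}P(n,n-m)q^{m}$. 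Your proposal does not engage with either of these arguments.
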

This was proved in \cite{MR3230848} when $\kappa$ is a finite field
of characteristic $p$, $K=\kappa((t))$, $\Gamma=\ZZ/p\ZZ$ and the
$\Gamma$-action on $\AA_{\kappa[[t]]}^{n}$ is defined over $\kappa$.
A particularly interesting situation of the conjecture is when $Y$
is the Hilbert scheme of $n$ points, $H_{\cO_{K}}$, now defined
over $\cO_{K}$ instead of $\FF_{q}$, $X$ is the symmetric product
of the affine plane, $S^{n}\AA_{\cO_{K}}^{2}=\AA_{\cO_{K}}^{2n}/S_{n}$
and $\Gamma$ is $S_{n}$. Then the left hand side of the equality
in the conjecture is nothing but that of formula (\ref{eq:hilb formula intro})
and the right is the left hand side of (\ref{eq:mass formula intro})
with $-\ba$ replaced with $\bw$. Therefore it is natural to ask
what the relation between the Artin conductor and the weight is. We
will study it in the tame case and also for permutation representations,
whether tame or wild. Using an obtained relation and Bhargava's more
precise formula \cite{MR2354798}, we will prove the following:
\begin{thm}[Theorem \ref{thm: Hilb}]
\label{thm:main intro}The conjecture above holds for $Y=H_{\cO_{K}}$,
$X=S^{n}\AA_{\cO_{K}}^{2}$ and $\Gamma=S_{n}$.
\end{thm}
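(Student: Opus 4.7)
The plan is to combine the Ellingsrud--Str\o mme formula for $\sharp Z(\FF_{q})$ with Bhargava's refined mass formula \cite{MR2354798} via an explicit local identity between the motivic weight $\bw$ and the Artin conductor $\ba$ for permutation representations of $S_{n}$.

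First I would decompose the problem along $G_{K}$-orbits. A continuous homomorphism $\rho:G_{K}\to S_{n}$ corresponds to an \'etale $K$-algebra $L_{\rho}=\prod_{i}L_{i}$ of degree $n$, with each factor $L_{i}/K$ a local field extension. The Artin conductor is additive on direct sums, so $\ba(\rho)=\sum_{i}\ba(\rho_{i})$; the analogous decomposition $\bw(\rho)=\sum_{i}\bw(\rho_{i})$ follows from the fact that the $\rho$-twisted arc space of $\AA_{\cO_{K}}^{2n}$ factors as a product over the $G_{K}$-orbits of $\{1,\dots,n\}$. This reduces everything to the case of a single local extension $L/K$.

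The crux is then the local identity $\bw(\rho_{L})=\ba(\rho_{L})$. The $S_{n}$-action on $\AA_{\cO_{K}}^{2n}=(\AA_{\cO_{K}}^{2})^{n}$ by permuting the $n$ copies of $\AA_{\cO_{K}}^{2}$ identifies the $\rho_{L}$-twist of $\AA_{\cO_{K}}^{2n}$ with the Weil restriction $\mathrm{Res}_{\cO_{L}/\cO_{K}}\AA_{\cO_{L}}^{2}$, so the $\rho_{L}$-twisted arc space becomes the $\cO_{L}$-arc space of $\AA_{\cO_{L}}^{2}$. The change of variables from $\cO_{K}$-arcs to $\cO_{L}$-arcs contributes a Jacobian whose valuation is $v_{K}(\mathfrak{d}_{L/K})$, which is precisely $\ba(\rho_{L})$ for the permutation representation. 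Tracking this factor through the motivic integral defining the weight yields $\bw(\rho_{L})=\ba(\rho_{L})$, to be proved in the section of the paper devoted to permutation representations.

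Granting the local identity, the theorem follows immediately. Bhargava's formula \eqref{eq:mass formula intro} is a polynomial identity in $q^{\pm1}$; substituting $q\leftrightarrow q^{-1}$ gives
\[
\frac{1}{n!}\sum_{\rho\in\Hom_{\mathrm{cont.}}(G_{K},S_{n})}q^{\ba(\rho)}=\sum_{m=0}^{n-1}P(n,n-m)\,q^{m}.
\]
The identity $\bw(\rho)=\ba(\rho)$ rewrites the left hand side as $\frac{1}{n!}\sum_{\rho}q^{\bw(\rho)}$, while \eqref{eq:hilb formula intro} identifies the right hand side with $\sharp Z(\FF_{q})$, giving the assertion of Conjecture \ref{conj: McKay points} for $Y=H_{\cO_{K}}$, $X=S^{n}\AA_{\cO_{K}}^{2}$ and $\Gamma=S_{n}$. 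The main obstacle is the wild case of the local identity: when the residue characteristic $p$ divides $n!$, $\bw(\rho_{L})$ is defined via a genuinely wild motivic integral over twisted arcs, and equating it with $v_{K}(\mathfrak{d}_{L/K})$ is not formal--one must show that the higher-ramification contributions to $\bw$ coincide with the Swan contributions to $\ba$ term by term. The identification of the twisted arc space with $\cO_{L}$-arcs on $\AA_{\cO_{L}}^{2}$ is what makes this tractable, because it reduces the wild weight to a quantity directly computable from the arithmetic of $L/K$.
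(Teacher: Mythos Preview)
Your argument has two genuine errors, both of which bite precisely in the wild case that is the whole point of the theorem. First, the local identity $\bw(\rho_L)=\ba(\rho_L)$ is false. Your Weil-restriction/Jacobian computation is essentially the calculation of $\bv_{2\sigma}(\rho_L)$, not of $\bw_{2\sigma}(\rho_L)$: by definition $\bw_\tau=\bar{\bt}_\tau-\bv_\tau$, and you have silently dropped the residual tame term $\bar{\bt}_{2\sigma}$. For permutation representations the paper proves $\bv_\sigma=\tfrac12\ba_\sigma$ (Theorem~\ref{thm:permutation}), whence $\bw_{2\sigma}=2\bt_\sigma-\ba_\sigma=\bt_\sigma-\bs_\sigma$ (Corollary~\ref{cor:comparison twice}); this coincides with $\ba_\sigma$ only when the Swan conductor vanishes. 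For instance, for a wildly ramified quadratic extension of $\QQ_2$ with $v_K(d_{L/K})=3$ one has $\bw_{2\sigma}=2\cdot1-3=-1$, not $3=\ba_\sigma$.

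Second, even granting $\bw=\ba$, the substitution $q\leftrightarrow q^{-1}$ in \eqref{eq:mass formula intro} is illegitimate: that equality is not a polynomial identity in a formal variable, because the index set $S_{K,S_n}$ itself depends on $K$ and hence on $q$. For $K=\QQ_2$, $n=2$, one computes $\tfrac12\sum_\rho q^{\ba_\sigma(\rho)}=21$, whereas $\sum_m P(2,2-m)q^m=1+q=3$. The paper's proof avoids both problems at once by using Bhargava's \emph{refined} mass formula: for each partition $\bp$ of $n$ into $n-m$ parts, $\tfrac{1}{n!}\sum_{\bp(\rho)=\bp}q^{-\ba_\sigma(\rho)}=q^{-m}$. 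Since $\bt_\sigma(\rho)=m$ is constant on the stratum $\{\bp(\rho)=\bp\}$, multiplying this by $q^{2m}$ and invoking $\bw_{2\sigma}=2\bt_\sigma-\ba_\sigma$ gives $\tfrac{1}{n!}\sum_{\bp(\rho)=\bp}q^{\bw_{2\sigma}(\rho)}=q^{m}$; summing over $\bp$ then matches the Ellingsrud--Str\o mme count. The partition-by-partition refinement is essential---the aggregated formula \eqref{eq:mass formula intro} alone is not enough.
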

Also we will prove the conjecture when $K$ has equal characteristic
prime to $\sharp\Gamma$ and the $\Gamma$-action is defined over
the residue field (Corollary \ref{cor: point tame 2}).

From Theorem \ref{thm:main intro}, we immediately see that formulas
for counts of local Galois representations with respect to $-\ba$
and $\bw$ interchange by the replacement $q\leftrightarrow q^{-1}$
in this particular case. We will find this duality in a few other
cases too. It is still very mysterious why and when the duality holds
especially in the wild case. This problem will be discussed in the
subsequent paper \cite{Wood-Yasuda-II}.

The paper is organized as follows. In Sections 2 to 4, we treat counting
functions such as $\ba$, $\bw$ and their relatives, and discuss
their relations. The last and longest section, Section 5, is devoted
to the McKay correspondence. Here we briefly recall the background
and formulate several versions of the McKay correspondence both in
the tame and wild cases, some of which are still conjectural. We prove
some of them in some cases. To formulate and prove a version of the
McKay correspondence in the tame case over an arbitrary perfect field,
we make heavy use of Deligne-Mumford stacks. Since this part of the
paper is rather independent of the rest, the reader unfamiliar with
stacks may skip it.

\subsection{Convention}

A local field means a complete discrete valuation field with (possibly
infinite) perfect residue field, unless otherwise noted.

\subsection{Acknowledgements}

Wood was supported by American Institute of Mathematics Five-Year
Fellowship and National Science Foundation grants DMS-1147782 and
DMS-1301690. Yasuda was supported by Grants-in-Aid for Scientific
Research (22740020).

\section{Total masses and the Artin conductor}

\subsection{Total masses and counting functions}

Let $K$ be a local field and $G_{K}:=\Gal(K^{\sep}/K)$ its absolute
Galois group. For a finite group $\Gamma,$ we put $S_{K,\Gamma}$
to be the set of continuous homomorphisms $G_{K}\to\Gamma.$ Given
a function $c:S_{K,\Gamma}\to\RR,$ the \emph{total mass} of $(K,\Gamma,c)$
is 
\[
M(K,\Gamma,c):=\frac{1}{\sharp\Gamma}\sum_{\rho\in S_{K,\Gamma}}q^{-c(\rho)}\in\RR_{\ge0}\cup\{\infty\},
\]
which was defined by Kedlaya \cite{MR2354797} for a special choice
of $c$ and Wood \cite{MR2411405} in general. This quantity is the
main concern of this paper. We call $c$ a \emph{counting function. }

We are mainly interested in two choices of the counting function $c$.
One is the \emph{Artin conductor} as studied in \cite{MR2354797}
and the other the \emph{weight} originating in the study of motivic
integration over wild Deligne-Mumford stacks \cite{Yasuda:2013fk}.%
\footnote{When $\Gamma$ is constructed from symmetric groups by using wreath
products and direct products, there is yet another interesting choice
of the counting function \cite{MR2411405}.%
} Both are associated to a representation of $\Gamma.$ They and their
relatives share many properties. For the unified treatment, we will
introduce the following notion.
\begin{defn}
\label{def: counting system}For each local field $K$, let us denote
by $R_{K}$ either a fixed field (independent of $K$), $K$ itself
or its integer ring $\cO_{K}$. A \emph{counting system }is a family
of functions,
\[
c_{\bullet}=(c_{K,\Gamma,\tau}:S_{K,\Gamma}\to\RR)_{K,\Gamma,\tau},
\]
where $K$ runs over local fields, $\Gamma$ finite groups and $\tau$
finite-dimensional $\Gamma$-representations $\Gamma\to\GL n{R_{K}}$. 
\end{defn}
Next we introduce several properties of a counting function system.
\begin{defn}
\label{def: complete}We say that $c_{\bullet}$ is \emph{additive
}if we always have 
\[
c_{K,\Gamma,\tau\oplus\sigma}=c_{K,\Gamma,\tau}+c_{K,\Gamma,\sigma}
\]
and if for the trivial representation $\alpha:\Gamma\to\GL 1{R_{K}}$,
we have
\[
c_{K,\Gamma,\alpha}\equiv0.
\]

We say that $c_{\bullet}$ is \emph{geometric }if we always have 
\[
c_{K,\Gamma,\tau}(\rho)=c_{K^{\nr},\Gamma,\tau^{\nr}}(\rho^{\nr}),
\]
where the superscript ``$\nr$'' means passing from $K$ to the
completion of the maximal unramified extension of $K$, which we denote
by $K^{\nr}$ by a slight abuse of notation.

We say that $c_{\bullet}$ is \emph{convertible }if for a homomorphism
$\phi:\Gamma_{1}\to\Gamma_{2}$ of finite groups and a $\Gamma_{2}$-representation
$\tau:\Gamma_{2}\to\GL n{R_{K}}$, we always have
\[
c_{K,\Gamma_{1},\tau\circ\phi}(\rho)=c_{K,\Gamma_{2},\tau}(\phi\circ\rho)\quad(\rho\in S_{K,\Gamma_{1}}).
\]

Finally we say that $c_{\bullet}$ is \emph{complete} if it is additive,
geometric and convertible.
\end{defn}

\subsection{The Artin conductor}

Let $L/K$ be a finite Galois extension of local fields with $G:=\Gal(L/K)$
and $G_{i}$, $i=0,1,\dots$, the $i$th ramification subgroups, which
are defined by
\[
G_{i}=\{g\in G\mid v_{L}(gx-g)\ge i+1\text{ for all }x\in\cO_{L}\}.
\]
Here $v_{L}$ is the normalized valuation of $L$. For a representation
$\tau:G\to\GL nk$ with $k$ either a fixed field or $K$, following
\cite{MR885783}, we define the \emph{Artin conductor }of $\tau$
by
\[
\ba_{\tau}(L/K)=\ba_{\tau}(L):=\sum_{i=0}^{\infty}\frac{1}{(G_{0}:G_{i})}\cdot\codim\,(k^{n})^{G_{i}}.
\]
We write the conductor as a function in $L$ for the consistency with
the later use. 
\begin{rem}
If $k=\CC$, then the definition above coincides with the one using
the inner product of characters as defined in \cite[VI, \S2]{MR554237}.
Taguchi \cite{MR1908909} proved that our Artin conductor satisfies
the same induction formula as the ordinary one over $\CC$ does. In
particular, the well-known relation between the Artin conductor and
the discriminant still holds in our setting.
\end{rem}
We also define the \emph{Swan conductor} and the \emph{tame part (of
the Artin conductor) }as follows:
\begin{align*}
\bs_{\tau}(L) & :=\sum_{i=1}^{\infty}\frac{1}{(G_{0}:G_{i})}\cdot\codim\,(k^{n})^{G_{i}}\text{ and}\\
\bt_{\tau}(L) & :=\ba_{\tau}(L)-\bs_{\tau}(L)=\codim\,(k^{n})^{G_{0}}.
\end{align*}

\subsection{The Artin conductor as a counting system}

Let $K$ be a local field and fix a representation $\tau:\Gamma\to\GL nk$
with $k=K$ or a field independent of $K$. Then for $\rho\in S_{K,\Gamma},$
we put $L_{\rho}/K$ to be the Galois extension corresponding to the
kernel of $\tau\circ\rho$ and $\tau':\Gamma'\hookrightarrow\GL nk$
to be the induced representation of $\Gamma':=\Im(\tau\circ\rho)$.
We define the function 
\begin{align*}
\ba_{K,\Gamma,\tau}=\ba_{\tau}:S_{K,\Gamma} & \to\ZZ_{\ge0}\\
\rho & \mapsto\ba_{\tau}(\rho):=\ba_{\tau'}(L_{\rho}),
\end{align*}
which we again call the \emph{Artin conductor. }Similarly we define
the \emph{Swan conductor,} $\bs_{K,\Gamma,\tau}=\bs_{\tau}$, and
the \emph{tame part of the Artin conductor,} $\bt_{K,\Gamma,\tau}=\bt_{\tau}$,
as functions on $S_{K,\Gamma}.$ We thus have obtained three counting
systems $\ba_{\bullet}$, $\bs_{\bullet}$ and $\bt_{\bullet}$. 
\begin{prop}
$\ba_{\bullet}$, $\bs_{\bullet}$ and $\bt_{\bullet}$ are complete.\end{prop}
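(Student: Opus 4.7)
The plan is to verify each of the three properties---additive, geometric, convertible---for $\ba_{\bullet}$, and then to note that the arguments for $\bs_{\bullet}$ and $\bt_{\bullet}$ go through identically, since these are obtained from $\ba_{\bullet}$ simply by restricting the summation range (to $i \geq 1$, respectively extracting the $i = 0$ term).

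Convertibility is essentially tautological. Given $\phi \colon \Gamma_{1} \to \Gamma_{2}$, $\tau \colon \Gamma_{2} \to \GL n{R_{K}}$, and $\rho \in S_{K,\Gamma_{1}}$, the two compositions $(\tau \circ \phi) \circ \rho$ and $\tau \circ (\phi \circ \rho)$ are the same continuous homomorphism $G_{K} \to \GL n{R_{K}}$. Hence they share the same kernel, the same fixed field $L_{\rho}$, and the same induced faithful representation $\tau'$, so the two values $\ba_{K,\Gamma_{1},\tau\circ\phi}(\rho)$ and $\ba_{K,\Gamma_{2},\tau}(\phi \circ \rho)$ coincide directly from the definition.

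For the geometric property I would use that $G_{K^{\nr}}$ is canonically the inertia subgroup of $G_{K}$. If $L/K$ corresponds to $\ker(\tau \circ \rho)$ and $G := \Gal(L/K)$, then the analogous extension for $\rho^{\nr}$ is $L \cdot K^{\nr}/K^{\nr}$, whose Galois group is the inertia subgroup $G_{0} \subseteq G$; moreover, the lower-numbering ramification filtration of $G_{0}$ viewed on its own is still $G_{0} \supseteq G_{1} \supseteq \cdots$, since these subgroups are defined via valuations in $L$ that are unchanged under base change to $K^{\nr}$. Because the defining sum $\sum_{i \geq 0}(G_{0}:G_{i})^{-1}\codim (k^{n})^{G_{i}}$ only involves these subgroups and their indices, its value is unchanged.

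The heart of the proof is additivity. For the trivial representation $\alpha$, one has $(R_{K})^{G_{i}} = R_{K}$ for every $i$, so each codimension vanishes and $\ba_{\alpha} \equiv 0$. For additivity over direct sums, the key technical input is \emph{inflation invariance}: if $\tau$ factors through a quotient $G \twoheadrightarrow G'$, then computing its Artin conductor using either $G$ or $G'$ gives the same value. This is a standard consequence of Herbrand's theorem (upper-numbering ramification commutes with quotients), most transparently via the integral $\ba_{\tau} = \int_{-1}^{\infty} \codim (k^{n})^{G^{u}}\,du$, and holds in the paper's generality by Taguchi's induction formula~\cite{MR1908909}. Granting it, I would take $L$ to be the compositum of the extensions associated to $\tau \circ \rho$ and $\sigma \circ \rho$, set $G := \Gal(L/K)$, and use inflation invariance to express each of $\ba_{\tau}(\rho)$, $\ba_{\sigma}(\rho)$, and $\ba_{\tau \oplus \sigma}(\rho)$ as a sum over the common filtration $\{G_{i}\}$; additivity then reduces to the elementary identity $\codim (V \oplus W)^{G_{i}} = \codim V^{G_{i}} + \codim W^{G_{i}}$. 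The main obstacle is locating and citing inflation invariance in the generality of arbitrary coefficient fields, since the paper's definition uses codimensions of fixed subspaces rather than character-theoretic inner products; once this is in hand, the remainder of the proof is a routine unwinding of definitions.
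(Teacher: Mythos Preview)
Your approach matches the paper's, which is very terse: it simply declares additivity and convertibility ``obvious from the definition'' and attributes geometricity to the stability of the ramification filtration under passage to $K^{\nr}$. Your treatments of convertibility and geometricity are essentially the same as the paper's, just spelled out.

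Where you go further is additivity. The paper glosses over the point you correctly flag: since $\ba_{\tau}(\rho)$ is defined using the extension $L_{\rho}$ cut out by $\ker(\tau\circ\rho)$, which depends on $\tau$, the three terms in $\ba_{\tau\oplus\sigma}(\rho)=\ba_{\tau}(\rho)+\ba_{\sigma}(\rho)$ are a priori computed with different filtrations, and one needs inflation invariance (equivalently, Herbrand's theorem on the behaviour of lower numbering under quotients) to bring them to a common $L$. This is a standard fact and is presumably what the authors regard as part of the ``definition'' package, but your identification of it as the key technical input is accurate and more careful than the paper's one-line dismissal. Once inflation invariance is in hand, your reduction to $\codim(V\oplus W)^{G_{i}}=\codim V^{G_{i}}+\codim W^{G_{i}}$ is exactly right.
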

\begin{proof}
The additivity and convertibility are obvious from the definition.
The geometricity follows from the fact that the higher ramification
groups $G_{i}$ of a Galois extension $L/K$ is stable by passing
to $L^{\nr}/K^{\nr}$. 
\end{proof}
The completeness is shared by all counting systems appearing in this
paper including the weight introduced below. Let us now see a property
of $ $the Artin conductor which distinguish it from the weight. 
\begin{lem}
Let $\alpha:\Gamma\to\Gamma$ be an automorphism preserving all subgroups
and define a map
\[
\alpha^{*}:S_{K,\Gamma}\to S_{K,\Gamma},\,\rho\mapsto\alpha\circ\rho.
\]
Then, for $c=\ba,$ $\bs$ and $\bt$ and for any $\Gamma$-representation
$\tau$, we have
\[
c_{\tau}=c_{\tau}\circ\alpha^{*}.
\]
\end{lem}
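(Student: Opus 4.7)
The plan is to unwind the definitions and show that replacing $\rho$ by $\alpha \circ \rho$ changes neither the extension $L_\rho/K$ nor the images of its ramification groups inside $\GL nk$, so that the defining sums for $\ba$, $\bs$, and $\bt$ are term-by-term equal.

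The first step is to observe $L_{\alpha\circ\rho} = L_\rho$. Since $\alpha$ preserves every subgroup of $\Gamma$, in particular $\alpha^{-1}(\ker\tau) = \ker\tau$, and therefore
\[
\ker(\tau\circ\alpha\circ\rho) \;=\; \rho^{-1}\bigl(\alpha^{-1}(\ker\tau)\bigr) \;=\; \rho^{-1}(\ker\tau) \;=\; \ker(\tau\circ\rho).
\]
Passing to fixed fields gives $L_{\alpha\circ\rho} = L_\rho$. Similarly $\Gamma' := \Im(\tau\circ\rho) = \tau(\alpha(\Im\rho)) = \Im(\tau\circ\alpha\circ\rho)$, so the induced subrepresentation $\tau'\colon\Gamma' \hookrightarrow \GL nk$ is literally the same for $\rho$ and for $\alpha\circ\rho$.

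The key step is to compare the two identifications $\pi_\rho,\pi_{\alpha\circ\rho}\colon \Gal(L_\rho/K) \xrightarrow{\sim} \Gamma'$ induced by $\tau\circ\rho$ and $\tau\circ\alpha\circ\rho$. Because $\alpha$ fixes $\ker\tau$ as a subgroup, it descends to an automorphism $\tilde\alpha$ of $\Im\tau \cong \Gamma/\ker\tau$; any subgroup $H\subseteq\Im\tau$ has the form $\tau(\tau^{-1}(H))$ with $\tau^{-1}(H)\subseteq\Gamma$ preserved by $\alpha$, so $\tilde\alpha(H)=H$. Thus $\tilde\alpha$ inherits the subgroup-preserving property, and from the two explicit formulas for $\pi_\rho$ and $\pi_{\alpha\circ\rho}$ we read off $\pi_{\alpha\circ\rho} = \tilde\alpha|_{\Gamma'}\circ\pi_\rho$.

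With this in hand, for each ramification subgroup $G_i\subseteq\Gal(L_\rho/K)$ we obtain $\pi_{\alpha\circ\rho}(G_i) = \tilde\alpha(\pi_\rho(G_i)) = \pi_\rho(G_i)$ as subgroups of $\Gamma'\subseteq\GL nk$, and hence the fixed subspaces coincide, $(k^n)^{\pi_\rho(G_i)} = (k^n)^{\pi_{\alpha\circ\rho}(G_i)}$. The defining sums for $\ba_\tau$, $\bs_\tau$, and $\bt_\tau$ therefore agree term by term, proving the lemma. I do not expect a genuine obstacle; the one point requiring a moment's thought is the descent of the subgroup-preserving hypothesis from $\alpha$ on $\Gamma$ to $\tilde\alpha$ on $\Im\tau$, on which the entire argument pivots.
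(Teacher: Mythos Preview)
Your argument is correct and follows the same approach as the paper: the paper's proof is the single sentence ``the Galois extensions $L_\rho$ and $L_{\alpha\circ\rho}$ are identical, which shows the lemma,'' and you have spelled out both this identity and the implicit step that the two identifications of $\Gal(L_\rho/K)$ with $\Gamma'\subset\GL nk$ send each ramification subgroup $G_i$ to the same subgroup (via the subgroup-preserving property of $\alpha$). Your descent argument for $\tilde\alpha$ is sound; an equivalent but slightly shorter route is to observe directly that the image of $G_i$ in $\GL nk$ is $\tau(\rho(H_i))$ for the appropriate $H_i\subset G_K$, and $\tau(\alpha(\rho(H_i)))=\tau(\rho(H_i))$ since $\rho(H_i)$ is a subgroup of $\Gamma$.
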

\begin{proof}
With the notation as above, the Galois extensions $L_{\rho}$ and
$L_{\alpha\circ\rho}$ are identical, which shows the lemma.
\end{proof}

\subsection{Bhargava's mass formula}

Let $K$ be a local field with a finite residue field $\FF_{q}.$
Serre \cite{MR500361} proved the following \emph{mass formula}: for
a positive integer $n$, 
\[
\sum_{L}\frac{1}{\sharp\Aut(L/K)}\cdot q^{-v_{K}(d_{L/K})}=q^{1-n},
\]
where $L/K$ runs over the isomorphism classes of totally ramified
field extensions with $[L:K]=n$ and $d_{L/K}$ denotes their discriminants.
Then Bhargava \cite{MR2354798} proved an analogous formula for \'etale
extensions: if $P(n,r)$ denotes the number of partitions of $n$
into exactly $r$ parts, then 
\[
\sum_{E}\frac{1}{\sharp\Aut(E/K)}\cdot q^{-v_{K}(d_{E/K})}=\sum_{m=0}^{n-1}P(n,n-m)\cdot q^{-m},
\]
where $E/K$ runs over the isomorphism classes of \'etale extensions
with $[E:K]=n.$

Let $S_{n}$ be the $n$th symmetric group. Each element $\rho\in S_{K,S_{n}}$
defines a $G_{K}$-action on $\{1,2,\dots,n\}$ and an \'etale extension
$E_{\rho}/K$ of degree $n$. This gives a one-to-one correspondence
between the classes of $\rho\in S_{K,S_{n}}$ modulo the conjugation
of $S_{n}$ and the isomorphism classes of \'etale extensions $E/K$
with $[E:K]=n.$ Let 
\[
\sigma:S_{n}\to\GL nk
\]
 be the defining representation of $S_{n}$ with $k$ an arbitrary
field. Kedlaya \cite{MR2354797} showed%
\footnote{In fact, he considers only the case $k=\CC$. However, since $\sigma$
is a permutation representation, $\ba_{\sigma}$ is independent of
$k$. Also, for any representation, Taguchi \cite{MR1908909} proved
that the relation between the Artin conductor and the discriminant
is valid over any field.%
} that for $\rho\in S_{K,S_{n}},$ 
\begin{equation}
\ba_{\sigma}(\rho)=v_{K}(d_{E_{\rho}/K})\label{eq:Artin=00003Ddisc}
\end{equation}
and that through the correspondence above, Bhargava's formula is rewritten
as follows: 
\begin{equation}
M(K,S_{n},\ba_{\sigma})=\sum_{i=0}^{n-1}P(n,n-m)\cdot q^{-m}.\label{eq:Bhargava-Kedlaya}
\end{equation}
We can slightly generalize Equality (\ref{eq:Artin=00003Ddisc}) as
follows:
\begin{lem}
\label{lem:Kedlaya}Let $\tau:\Gamma\to\GL nk$ be a permutation representation
of a finite group $\Gamma$. For $\rho\in S_{K,\Gamma},$ we denote
by $E_{\rho}/K$ the corresponding \'etale extension of degree $n.$
Then 
\[
\ba_{\tau}(\rho)=v_{K}(d_{E_{\rho}/K}).
\]
\end{lem}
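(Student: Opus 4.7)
The plan is to reduce this to Kedlaya's identity (\ref{eq:Artin=00003Ddisc}) for the defining representation $\sigma$ of $S_n$ by using the convertibility of the Artin conductor as a counting system.

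First, I would observe that, by definition, a permutation representation $\tau:\Gamma\to\GL n k$ of degree $n$ factors through the symmetric group: there is a homomorphism $\phi:\Gamma\to S_n$ such that $\tau=\sigma\circ\phi$, where $\sigma:S_n\to\GL n k$ is the defining representation. The point is that the permutation action of $\Gamma$ on the basis is encoded precisely by $\phi$, and conjugating by the resulting action on basis vectors is exactly $\sigma\circ\phi$.

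Second, I would apply the convertibility property of $\ba_\bullet$ (part of its completeness, established in the preceding proposition) to the homomorphism $\phi$, yielding
\[
\ba_{K,\Gamma,\tau}(\rho)=\ba_{K,\Gamma,\sigma\circ\phi}(\rho)=\ba_{K,S_n,\sigma}(\phi\circ\rho).
\]
The étale extension of $K$ attached to a continuous homomorphism $G_K\to S_n$ depends only on the underlying $G_K$-set structure on $\{1,\dots,n\}$, and the $G_K$-set associated to $\rho$ via $\tau$ is by construction the same as the one associated to $\phi\circ\rho$ via $\sigma$. Hence $E_{\phi\circ\rho}=E_\rho$.

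Third, I would invoke Kedlaya's formula (\ref{eq:Artin=00003Ddisc}) applied to $\phi\circ\rho\in S_{K,S_n}$, giving
\[
\ba_{K,S_n,\sigma}(\phi\circ\rho)=v_K(d_{E_{\phi\circ\rho}/K})=v_K(d_{E_\rho/K}),
\]
which combined with the previous display yields the claim. There is no real obstacle here; the only thing to justify with some care is the identification $E_{\phi\circ\rho}=E_\rho$, i.e. that the recipe ``$G_K$-set $\leadsto$ étale algebra'' is insensitive to whether we view the action via $\Gamma$ or through its image in $S_n$, which is immediate from the functoriality of the Galois correspondence for étale algebras.
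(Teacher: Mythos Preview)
Your proof is correct and follows essentially the same route as the paper: factor $\tau$ through $S_n$ via $\phi$, use convertibility of $\ba_\bullet$ to pass from $(\Gamma,\tau,\rho)$ to $(S_n,\sigma,\phi\circ\rho)$, observe that $E_{\phi\circ\rho}=E_\rho$, and conclude by Kedlaya's identity (\ref{eq:Artin=00003Ddisc}). The paper's version is simply more terse, but the argument is the same.
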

\begin{proof}
From the convertibility of $\ba_{\bullet}$, we may replace $\Gamma$
with $S_{n}$ and $\tau$ with its defining representation $\sigma$
without changing either side of the equality. Thus we can reduce to
(\ref{eq:Artin=00003Ddisc}).
\end{proof}

\section{Weights\label{sec:Weights}}

Another function on $S_{K,\Gamma}$, the weight, came from the study
of the wild McKay correspondence using motivic integration \cite{Yasuda:2013fk}
(see Section 5). 

Let $\Gamma$ be a finite group. As $G_{K}$ is the \'etale fundamental
group of $\Spec K,$ there is a one-to-one correspondence between
$S_{K,\Gamma}$ and the set of pointed \'etale $\Gamma$-torsors
over $\Spec K$ modulo isomorphism (see \cite[page 44]{MR559531}).
As the data of pointing are not so important in what follows, we will
often ignore them. We denote the $\Gamma$-torsor corresponding to
$\rho\in S_{K,\Gamma}$ as 
\begin{equation}
T_{\rho}=\Spec M_{\rho}\to\Spec K.\label{eq:given torsor}
\end{equation}
Let $\cO_{M_{\rho}}$ be the integral closure of $\cO_{K}$ in $M_{\rho}$
and $\tau:\Gamma\to\GL n{\cO_{K}}$ a $\Gamma$-representation. Then
$\Gamma$ acts on $\cO_{M_{\rho}}^{\oplus n}$ in two ways. First
$\Gamma$ acts on $\cO_{M_{\rho}}$ and hence diagonally on $\cO_{M_{\rho}}^{\oplus n}$,
which determines 
\[
\delta:\Gamma\to\Aut_{\cO_{K}}(\cO_{M_{\rho}})^{\oplus n}.
\]
On the other hand, $\tau$ naturally extends to 
\[
\tilde{\tau}:\Gamma\to\GL n{\cO_{K}}\hookrightarrow\GL n{\cO_{M_{\rho}}}.
\]

\begin{defn}
We define the\emph{ tuning submodule} $\Xi_{\rho}\subset\cO_{M_{\rho}}^{\oplus n}$
to be the subset of elements on which the two actions $\delta$ and
$\tilde{\tau}$ coincide, that is, 
\[
\Xi_{\rho}:=\{x\in\cO_{M_{\rho}}^{\oplus n}\mid\forall\gamma\in\Gamma,\,\delta(\gamma)(x)=\tilde{\tau}(\gamma)(x)\}.
\]
When we want to specify $\tau$, we write it as $\Xi_{\rho}^{\tau}$. \end{defn}
\begin{lem}[\cite{Yasuda:2013fk}]
$\Xi_{\rho}$ is a free $\cO_{K}$-module of rank $n$.\end{lem}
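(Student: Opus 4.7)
The plan is to exhibit $\Xi_\rho$ as the kernel of a morphism between finite free $\cO_K$-modules, deduce freeness from the fact that $\cO_K$ is a complete discrete valuation ring, and then compute the rank by base-changing to a splitting field of the torsor $T_\rho$.

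First, observe that both $\delta$ and $\tilde\tau$ act $\cO_K$-linearly on $\cO_{M_\rho}^{\oplus n}$, so
\[
\Xi_\rho = \ker\Bigl(\cO_{M_\rho}^{\oplus n}\xrightarrow{\bigl(\delta(\gamma)-\tilde\tau(\gamma)\bigr)_{\gamma\in\Gamma}}\prod_{\gamma\in\Gamma}\cO_{M_\rho}^{\oplus n}\Bigr).
\]
Since $M_\rho/K$ is \'etale and $\cO_K$ is a complete DVR, $\cO_{M_\rho}$ is a finitely generated free $\cO_K$-module of rank $\sharp\Gamma$, so $\Xi_\rho$ is a submodule of a finitely generated free module over a PID, hence itself finitely generated and free. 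It therefore suffices to compute its rank, and since tensoring with $K$ is exact we have $\rank_{\cO_K}\Xi_\rho=\dim_K\Xi_\rho^K$, where $\Xi_\rho^K\subset M_\rho^{\oplus n}$ is the analogous fixed-point subspace for the induced $K$-linear actions.

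To evaluate this dimension I would further base-change to a finite Galois extension $\widetilde K/K$ that trivializes the torsor $T_\rho$ (for instance the fixed field of $\ker\rho$ in $K^{\sep}$). Over $\widetilde K$ one obtains a $\Gamma$-equivariant $\widetilde K$-algebra isomorphism $M_\rho\otimes_K\widetilde K\cong\mathrm{Map}(\Gamma,\widetilde K)$, with $\Gamma$ acting by translation. Under this identification the condition $\delta(\gamma)f=\tilde\tau(\gamma)f$ becomes $f(\gamma\gamma')=\tau(\gamma)f(\gamma')$ for all $\gamma,\gamma'\in\Gamma$, which forces $f$ to be determined by its value $f(e)\in\widetilde K^n$, and conversely every such value extends uniquely. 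Thus the twisted invariants over $\widetilde K$ form a $\widetilde K$-vector space of dimension $n$, and by faithful flatness of $\widetilde K/K$ the same holds after descent, giving $\rank_{\cO_K}\Xi_\rho=n$.

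The main obstacle is bookkeeping of conventions: one must match the convention for the $\Gamma$-action on $T_\rho$ with that for $\tau$ so that, after trivialization over $\widetilde K$, the twisted fixed-point condition genuinely cuts out an $n$-dimensional space rather than a smaller one. Once this is arranged, the argument is a concrete incarnation of the standard ``twist a representation by a torsor'' construction, which on an affine base produces a locally free sheaf of the same rank as the representation, and which over the local base $\Spec\cO_K$ is automatically free.
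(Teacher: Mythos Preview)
Your proof is correct and follows essentially the same route as the paper's: freeness from being a submodule of a finite free module over a PID, and rank computed by base-changing to a field extension that trivializes the torsor, where the twisted invariants are identified with functions $\Gamma\to\widetilde K^{\,n}$ determined by their value at the identity. The only cosmetic difference is that you pass through $K$ before $\widetilde K$, whereas the paper tensors directly with the splitting field $L$ over $\cO_K$.
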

\begin{proof}
This is obviously a torsion-free, hence free $\cO_{K}$-module. To
show that this has rank $n$, we take a trivial $\Gamma$-torsor $\Spec N\to\Spec L$
obtained from (\ref{eq:given torsor}) by a scalar extension $L/K$
. Then we write $N=\prod_{\gamma\in\Gamma}L_{\gamma}$ such that $L_{\gamma}$
are copies of $L$ and each $\gamma\in\Gamma$ induces an isomorphism
$L_{1}\xrightarrow{\sim}L_{\gamma}$ which is actually the identity
of $L$. Then $\Xi_{\rho}\otimes_{\cO_{K}}L\subset N^{\oplus n}$
is still the subset of those elements on which the two induced $\Gamma$-actions
coincide. Hence 
\[
\Xi_{\rho}\otimes_{\cO_{K}}L=\{(\tau(\gamma)(x))_{\gamma\in\Gamma}\mid x\in L^{\oplus n}\}\cong L^{\oplus n}.
\]
This shows that $\Xi_{\rho}$ has rank $n$.\end{proof}
\begin{defn}
We define a counting system $\bv_{\bullet}$ as follows: Let $x_{i}=(x_{ij})_{1\le j\le n}\in\cO_{M_{\rho}}^{\oplus n}$,
$1\le i\le n,$ be a basis of $\Xi_{\rho}$ as a free $\cO_{K}$-module.
Then we define $\bv_{K,\Gamma,\tau}(\rho)=\bv_{\tau}(\rho)\in\frac{1}{\sharp\Gamma}\ZZ_{\ge0}$
by
\[
\bv_{\tau}(\rho):=\frac{1}{\sharp\Gamma}\cdot\length\left(\frac{\cO_{M_{\rho}}}{(\det(x_{ij}))}\right)=\frac{1}{\sharp\Gamma}\cdot\length\left(\frac{\cO_{M_{\rho}}^{\oplus n}}{\cO_{M_{\rho}}\cdot\Xi_{\rho}}\right).
\]
\end{defn}
\begin{lem}
$\bv_{\bullet}$ is complete.\end{lem}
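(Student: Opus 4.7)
The statement asserts three properties of $\bv_\bullet$: additivity, geometricity, and convertibility. Additivity and geometricity are formal consequences of the defining formula; convertibility is the main technical step.

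For additivity, the defining conditions on $\Xi$ block-decompose, so $\Xi_\rho^{\tau_1\oplus\tau_2}=\Xi_\rho^{\tau_1}\oplus\Xi_\rho^{\tau_2}$ inside $\cO_{M_\rho}^{\oplus n_1}\oplus\cO_{M_\rho}^{\oplus n_2}$; a concatenation of bases has block-diagonal matrix, the determinant factorizes, and the cokernel length is additive. For the trivial representation $\alpha$, $\Xi_\rho^\alpha$ is spanned over $\cO_K$ by $1\in\cO_{M_\rho}$, and $\cO_{M_\rho}\cdot 1=\cO_{M_\rho}$ gives a zero cokernel. For geometricity, I observe that $\Xi_\rho^\tau$ is the equalizer of two $\cO_K$-linear maps of finite free $\cO_{M_\rho}$-modules, a construction that commutes with the flat base change $\cO_K\to\cO_{K^{\nr}}$. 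Hence $\Xi_{\rho^{\nr}}^{\tau^{\nr}}\cong\Xi_\rho^\tau\otimes_{\cO_K}\cO_{K^{\nr}}$, the cokernel base-changes accordingly, and its $\cO_K$-length is preserved under the flat unramified extension (whereas the $\cO_{M_\rho}$-length in general is not).

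For convertibility, I factor $\phi:\Gamma_1\to\Gamma_2$ as a surjection $\Gamma_1\twoheadrightarrow\Gamma':=\phi(\Gamma_1)$ followed by an inclusion $\Gamma'\hookrightarrow\Gamma_2$, and handle each factor separately. In the surjective case (with $N:=\Ker\phi$), the triviality of $\tau\circ\phi|_N$ forces elements of $\Xi_\rho^{\tau\circ\phi}$ to be $\delta(N)$-invariant, so they lie in $\cO_{M_{\rho'}}^{\oplus n}=(\cO_{M_\rho}^N)^{\oplus n}\subset\cO_{M_\rho}^{\oplus n}$; the remaining tuning conditions for $\Gamma'=\Gamma_1/N$ coincide with those defining $\Xi_{\rho'}^\tau$, so $\Xi_\rho^{\tau\circ\phi}=\Xi_{\rho'}^\tau$ as $\cO_K$-modules. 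Since $\cO_{M_\rho}$ is free of rank $\sharp N$ over $\cO_{M_{\rho'}}$, the cokernel $\cO_K$-length scales by $\sharp N$, which together with $\sharp\Gamma_1=\sharp N\cdot\sharp\Gamma'$ gives the equality.

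In the injective case (with $r:=[\Gamma_2:\Gamma']$), I realize $M_{\rho'}$ as the algebra of $\Gamma'$-equivariant maps $\Gamma_2\to M_\rho$, which splits by evaluation at coset representatives $\gamma_2^{(1)},\dots,\gamma_2^{(r)}$ into a product $\prod_{i=1}^r M_\rho$. Using $\Gamma_2$-equivariance, any $x\in\Xi_{\rho'}^\tau$ satisfies $x(y\gamma_2)=\tau(\gamma_2)x(y)$ and is determined by $x(1)$; the $\Gamma'$-equivariance forces $x(1)\in\Xi_\rho^{\tau\circ\phi}$, so $x\mapsto x(1)$ is an isomorphism $\Xi_{\rho'}^\tau\xrightarrow{\sim}\Xi_\rho^{\tau\circ\phi}$. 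The cokernel in the $i$-th component equals $\cO_{M_\rho}^{\oplus n}/\cO_{M_\rho}\bigl(\tau(\gamma_2^{(i)})\Xi_\rho^{\tau\circ\phi}\bigr)$, which is obtained from $\cO_{M_\rho}^{\oplus n}/\cO_{M_\rho}\Xi_\rho^{\tau\circ\phi}$ by the $\GL n{\cO_K}$-automorphism $\tau(\gamma_2^{(i)})$ and has the same length. Summing over the $r$ components and dividing by $\sharp\Gamma_2=r\sharp\Gamma'$ completes the argument. The main obstacle is this injective case, where one must correctly identify the induced-torsor structure of $M_{\rho'}$ and exploit the $\Gamma_2$-equivariance to reduce $\Xi_{\rho'}^\tau$ to a single copy of $\Xi_\rho^{\tau\circ\phi}$.
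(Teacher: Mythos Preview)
The proposal is correct and takes essentially the same approach as the paper: additivity is immediate, geometricity follows from flat base change of the kernel defining $\Xi_\rho$, and convertibility is handled by factoring $\phi$ into a surjection and an injection and comparing the tuning modules in each case. Your account is more explicit than the paper's (which omits the length bookkeeping after identifying the tuning modules) and phrases the injective case via the induced-torsor/equivariant-function viewpoint rather than the paper's concrete product decomposition $\cO_{M_2}=\prod_i\cO_{M_{1,i}}$, but these are the same argument.
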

\begin{proof}
The additivity is obvious. To show the geometricity, it suffices to
show the identity of submodules of $\cO_{M_{\rho^{\nr}}}$, 
\[
\Xi_{\rho^{\nr}}=\Xi_{\rho}\otimes_{\cO_{M_{\rho}}}\cO_{M_{\rho^{\nr}}}.
\]
Indeed $\Xi_{\rho}$ is defined as the kernel of an $\cO_{K}$-linear
map
\[
(\delta(\gamma)-\tilde{\tau}(\gamma))_{\gamma\in\Gamma}:\cO_{M_{\rho}}^{\oplus n}\to(\cO_{M_{\rho}}^{\oplus n})^{\oplus\sharp\Gamma}
\]
and similarly for $\Xi_{\rho^{\nr}}$: denote them $\alpha$ and $\alpha_{\nr}$.
From the compatibilities of tensor product with completion and direct
limit, we have that $\cO_{M_{\rho}^{\nr}}=\cO_{M_{\rho}}\otimes_{\cO_{K}}\cO_{K^{\nr}}$.
We see that $\alpha_{\nr}$ is obtained from $\alpha$ by tensoring
$\cO_{K^{\nr}}$ over $\cO_{K}$, for instance, by looking at the
matrix representations of these maps. Therefore $\alpha_{\nr}$ is
obtained from $\alpha$ by tensoring $\cO_{M_{\rho}^{\nr}}$ over
$\cO_{M_{\rho}}$. The desired identity holds since $\cO_{M_{\rho^{\nr}}}$
is flat over $\cO_{M_{\rho}}$. 

To show the convertibility, it suffices to show it in the cases where
$\phi:\Gamma_{1}\to\Gamma_{2}$ in Definition \ref{def: complete}
is respectively injective and surjective. First we consider the former
case, supposing that $\Gamma_{1}$ is a subgroup of $\Gamma_{2}$.
Let $\rho_{1}\in S_{K,\Gamma_{1}}$, $\rho_{2}:=\phi\circ\rho_{1}\in S_{K,\Gamma_{2}}$
and 
\[
T_{i}=\Spec M_{i}\to\Spec K\quad(i=1,2)
\]
the corresponding $\Gamma_{i}$-torsors. Then $T_{2}$ is isomorphic
to the disjoint union of copies of $T_{1}$. Therefore if $\gamma_{1}=1,\gamma_{2},\dots,\gamma_{l}\in\Gamma_{2}$
are representatives of $\Gamma_{1}$-cosets, we can write 
\[
\cO_{M_{2}}=\prod_{i}\cO_{M_{1},i},
\]
with $\cO_{M_{1},i}$ copies of $\cO_{M_{1}}$. Moreover we suppose
that for each $i$, the $\gamma_{i}$-action on $\cO_{M_{2}}$ induces
the isomorphism $\cO_{M_{1},1}\to\cO_{M_{1},i}$ coming from the identity
of $\cO_{M_{1}}$. If $\Xi_{i}\subset\cO_{M_{i}}^{\oplus n}$, $i=1,2$,
are the tuning modules, then 
\begin{align*}
\Xi_{2} & =\{x\in\cO_{M_{2}}^{\oplus n}\mid\forall\gamma\in\Gamma_{2},\,\delta(\gamma)(x)=\tau(\gamma)(x)\}\\
 & =\{(x_{i})\in\prod_{i}\cO_{M_{1},i}^{\oplus n}\mid x_{1}\in\Xi_{1}\text{ and }x_{i}=\gamma_{i}(x_{1})\}.
\end{align*}
This shows the convertibility in this case. 

Next consider the case where $\phi$ is surjective. We may suppose
that $\Gamma_{2}$ is the quotient of $\Gamma_{1}$ by a normal subgroup
$N$. With the same notation as above, we have $T_{2}=T_{1}/N$ and
$\cO_{M_{2}}=\cO_{M_{1}}^{N}$. Then we see that $\Xi_{1}$ is contained
in $\cO_{M_{2}}^{\oplus n}$ and identical to $\Xi_{2}$. This finishes
the proof. 
\end{proof}
Let $\Gamma'\subset\Gamma$ be the stabilizer of a connected component
of $T_{\rho^{\nr}}=\Spec M_{\rho^{\nr}}$, that is, if $U$ is a connected
component, then $\Gamma'=\{\gamma\in\Gamma\mid\gamma(U)=U\}$. This
subgroup is unique up to conjugacy. Denoting the residue field of
$K$ by $\kappa,$ we consider the following $\Gamma'$-representation
over $\kappa$, 
\[
\Gamma'\hookrightarrow\Gamma\xrightarrow{\tau}\GL n{\cO_{K}}\to\GL n{\kappa},
\]
and write its fixed point locus as $(\kappa^{n})^{\Gamma'}.$ 
\begin{defn}
\label{def: residual}We define the \emph{residual tame part }by
\[
\bar{\bt}{}_{\tau}(\rho):=\codim\,(\kappa^{n})^{\Gamma'}.
\]

\end{defn}
The following is obvious from the definition.
\begin{lem}
The counting system $\bar{\bt}{}_{\bullet}$ is complete.\end{lem}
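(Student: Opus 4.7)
The plan is to unpack the formula $\bar{\bt}_\tau(\rho) = \codim\,(\kappa^n)^{\Gamma'}$ and verify additivity, geometricity, and convertibility separately, each by a short check. Additivity is immediate: for $\tau \oplus \sigma$ the fixed subspace is $(\kappa^n)^{\Gamma'} \oplus (\kappa^m)^{\Gamma'}$, so codimensions add, and the trivial representation has the whole space as its fixed locus, yielding codimension $0$.

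For geometricity, I would observe that $(K^{\nr})^{\nr} = K^{\nr}$, so $T_{(\rho^{\nr})^{\nr}} = T_{\rho^{\nr}}$ and the same subgroup $\Gamma'$ appears on both sides. On the left we compute $\codim\,(\kappa^n)^{\Gamma'}$ while on the right we compute the codimension of $((\kappa^{\sep})^n)^{\Gamma'}$, where $\kappa^{\sep}$ is the residue field of $K^{\nr}$. These agree because taking fixed points of a linear representation commutes with flat base change of the ground field, so the dimension of the fixed subspace is preserved.

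For convertibility, following the strategy used for $\bv_{\bullet}$, I would split into the surjective and injective cases for $\phi\colon \Gamma_1 \to \Gamma_2$. If $\phi$ is surjective with kernel $N$, then $T_{\phi\circ\rho}^{\nr} = T_{\rho}^{\nr}/N$, so its connected components are $N$-orbits of components of $T_{\rho}^{\nr}$; a direct check shows that the stabilizer $\Gamma_2'$ of such a component is $\phi(\Gamma_1' \cdot N) = \phi(\Gamma_1')$. If $\phi$ is injective, $T_{\phi\circ\rho}^{\nr}$ is the induced torsor, a disjoint union of copies of $T_\rho^{\nr}$ indexed by $\Gamma_2/\Gamma_1$, so the $\Gamma_2$-stabilizer of a component coincides with its $\Gamma_1$-stabilizer $\Gamma_1'$, and again $\phi(\Gamma_1') = \Gamma_2'$. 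In either case, $\Gamma_1'$ acts on $\kappa^n$ via $\tau \circ \phi$ through its image $\phi(\Gamma_1') = \Gamma_2'$, so the fixed subspaces coincide and the codimensions agree.

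The only mildly subtle point is the identification $\phi(\Gamma_1' N) = \Gamma_2'$ in the surjective case, which amounts to matching components of the quotient torsor with $N$-orbits of components upstairs; everything else is bookkeeping. All three properties should follow without appealing to motivic integration or to any deeper structure of $\Gamma'$.
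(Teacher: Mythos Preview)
Your verification is correct and follows the only natural approach: unpack the definition of $\bar{\bt}_\tau(\rho)=\codim\,(\kappa^n)^{\Gamma'}$ and check additivity, geometricity, and convertibility directly. The paper does not give a proof at all---it simply records ``The following is obvious from the definition''---so your write-up is a faithful (and considerably more explicit) expansion of what the paper leaves to the reader. In particular, your identification $\Gamma_2'=\phi(\Gamma_1')$ in both the surjective and injective cases is exactly the point one would need to spell out, and your handling of geometricity via $(K^{\nr})^{\nr}=K^{\nr}$ together with base change of the fixed subspace is the right observation.
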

\begin{defn}
\label{def: weight}We define the\emph{ weight }of $\rho\in S_{K,\Gamma}$
with respect to $\tau$ as 
\[
\bw_{\tau}(\rho):=\bar{\bt}{}_{\tau}(\rho)-\bv_{\tau}(\rho)\in\frac{1}{\sharp\Gamma}\cdot\ZZ_{\le n}.
\]
\end{defn}
\begin{cor}
The counting system $\bw_{\bullet}$ is complete.\end{cor}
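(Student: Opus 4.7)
The plan is to observe that $\bw_{\bullet}$ is by definition the difference of two counting systems, $\bar{\bt}_{\bullet}$ and $\bv_{\bullet}$, both of which have already been shown to be complete. So the corollary will follow immediately once we check that the three defining properties of completeness (additivity, geometricity, convertibility) are each preserved under $\RR$-linear combinations of counting systems.

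First, I would verify this closure property. For additivity, if $c_{\bullet}$ and $c'_{\bullet}$ satisfy $c_{\tau\oplus\sigma}=c_{\tau}+c_{\sigma}$ and $c_{\alpha}\equiv0$ for the trivial representation, then so does $c_{\bullet}-c'_{\bullet}$, since both sides of the equation in question are $\RR$-linear in the counting system. For geometricity, the identity $c_{K,\Gamma,\tau}(\rho)=c_{K^{\nr},\Gamma,\tau^{\nr}}(\rho^{\nr})$ is also $\RR$-linear in $c_{\bullet}$, so it passes to differences. The same observation applies to convertibility, whose defining equation is again $\RR$-linear in the counting system.

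Second, I would simply combine this with the fact that $\bar{\bt}_{\bullet}$ is complete (the lemma immediately preceding Definition \ref{def: weight}) and $\bv_{\bullet}$ is complete (the previous lemma) to conclude that $\bw_{\bullet}=\bar{\bt}_{\bullet}-\bv_{\bullet}$ is complete.

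There is really no obstacle here; the statement is a formal consequence of the construction of $\bw_{\bullet}$ and the completeness results proved just above. The only thing to be slightly careful about is that we use all three properties of completeness for both constituent counting systems, but each is either immediate or already recorded.
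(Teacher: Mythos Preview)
Your proposal is correct and matches the paper's own proof, which is the single line ``This follows from the completeness of $\bv_{\bullet}$ and $\bar{\bt}_{\bullet}$.'' You have simply made explicit the routine verification that completeness is preserved under differences, which the paper leaves implicit.
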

\begin{proof}
This follows from the completeness of $\bv_{\bullet}$ and $\bar{\bt}{}_{\bullet}$. 
\end{proof}

\section{A comparison of the Artin conductor and the weight}

For a representation $\tau:\Gamma\to\GL n{\cO_{K}}$, we define a
function $\ba_{\tau}$ on $S_{K,\Gamma}$ by $\ba_{\tau}:=\ba_{\iota\circ\tau}$
with $\iota:\GL n{\cO_{K}}\hookrightarrow\GL nK$. We will study a
relation among the Artin conductor $\ba_{\tau}$, the weight $\bw_{\tau}$
and their relatives.

\subsection{The tame case}

In this subsection, we consider the tame case, that is, $\sharp\Gamma$
is prime to the residue characteristic of $K$. Since the involved
counting systems are complete, in particular, geometric, without loss
of generality, we now suppose that $K$ has an algebraically closed
residue field. 

For $\rho\in S_{K,\Gamma},$ let $L_{\rho}/K$ be the Galois extension
defined by the kernel of $\tau\circ\rho$. Its Galois group $G:=\Gal(L_{\rho}/K)$
is cyclic, say having a generator $g$. As for higher ramification
groups, we have $G_{0}=G$ and $G_{i}=1$ for $i\ge1$. Therefore
we easily see the following.
\begin{lem}
We have
\[
\ba_{\tau}(\rho)=\bt_{\tau}(\rho)=\codim\,(K^{n})^{G}\text{ and }\bs_{\tau}(\rho)=0.
\]
\end{lem}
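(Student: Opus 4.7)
The plan is to read off the three quantities directly from their definitions once the ramification filtration is understood. Since the involved counting systems are geometric, we may and do assume the residue field $\kappa$ of $K$ is algebraically closed. With $\rho\in S_{K,\Gamma}$ fixed, let $L_\rho/K$ be the Galois extension cut out by $\ker(\tau\circ\rho)$, so $G=\Gal(L_\rho/K)\simeq\Im(\tau\circ\rho)$ is a finite subquotient of $\Gamma$. In particular $\sharp G\mid\sharp\Gamma$ is prime to the residue characteristic of $K$.

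First I would verify the shape of the ramification filtration. Since $\kappa$ is algebraically closed, the residue field of $L_\rho$ coincides with $\kappa$, so $L_\rho/K$ is totally ramified and $G_0=G$. Tameness (i.e.\ $\sharp G$ coprime to $\operatorname{char}\kappa$) then forces $G_i=1$ for all $i\ge1$; this is the standard fact that a tamely totally ramified extension has trivial wild inertia.

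Next I would plug this into the three definitions. For any $i\ge1$ we have $(K^n)^{G_i}=(K^n)^{\{1\}}=K^n$, hence $\codim(K^n)^{G_i}=0$, so every term in $i\ge1$ of the defining sums for $\ba_\tau(\rho)$ and $\bs_\tau(\rho)$ vanishes. Therefore
\[
\bs_\tau(\rho)=\sum_{i=1}^{\infty}\frac{1}{(G_0:G_i)}\codim(K^n)^{G_i}=0,
\]
and
\[
\ba_\tau(\rho)=\frac{1}{(G_0:G_0)}\codim(K^n)^{G_0}=\codim(K^n)^{G}.
\]
The tame-part formula $\bt_\tau(\rho)=\codim(K^n)^{G_0}=\codim(K^n)^{G}$ is then immediate from the definition. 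As a sanity check, $\ba_\tau=\bt_\tau+\bs_\tau$ is consistent with these values.

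There is no real obstacle: the lemma is essentially a direct computation, and the only content is recognizing that under the hypotheses of the subsection the filtration collapses to $G_0=G$, $G_{\ge1}=1$, after which the sums in the definitions of $\ba_\tau$ and $\bs_\tau$ truncate trivially.
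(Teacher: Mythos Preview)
Your proposal is correct and follows exactly the paper's approach: the paper records just before the lemma that $G_{0}=G$ and $G_{i}=1$ for $i\ge1$ and then says ``we easily see the following,'' which is precisely the computation you spell out. Your added justification for $G_{0}=G$ (totally ramified since $\kappa$ is algebraically closed) and $G_{1}=1$ (trivial wild inertia by tameness) makes explicit what the paper leaves implicit.
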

\begin{cor}
If $\tau^{\vee}$ is the dual representation of $\tau$, then
\[
\ba_{\tau}=\ba_{\tau^{\vee}}.
\]
\end{cor}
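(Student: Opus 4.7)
The plan is to reduce, via the preceding lemma, to a statement purely about the invariants of a finite group representation in a setting where Maschke's theorem applies.

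First I would apply the lemma to both sides: for $\rho \in S_{K,\Gamma}$ with image group $G = \Gal(L_\rho/K)$, we get
\[
\ba_\tau(\rho) = \codim\,(K^n)^G \quad \text{and} \quad \ba_{\tau^\vee}(\rho) = \codim\,((K^n)^\vee)^G,
\]
where in the second equality $G$ acts on $(K^n)^\vee$ via $\tau^\vee \circ \rho$. Both sides are taken with respect to the total dimension $n$, so the corollary reduces to showing
\[
\dim_K (K^n)^G = \dim_K ((K^n)^\vee)^G.
\]

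Next I would invoke the tameness hypothesis: $\sharp\Gamma$, and in particular $\sharp G$, is prime to the residue characteristic of $K$. Since $K$ has characteristic either $0$ or equal to its residue characteristic, $\sharp G$ is a unit in $K$. Thus Maschke's theorem applies and the category of $K[G]$-modules is semisimple. Decomposing $K^n = \bigoplus_i V_i^{\oplus n_i}$ into isotypic components with the trivial representation corresponding to index $0$, one has $\dim (K^n)^G = n_0$. Dualizing, $(K^n)^\vee = \bigoplus_i (V_i^\vee)^{\oplus n_i}$, where $V \mapsto V^\vee$ permutes the irreducible classes and fixes the trivial class. Hence $\dim ((K^n)^\vee)^G = n_0$ as well, giving the desired equality. (Equivalently, over an algebraic closure both dimensions equal $\frac{1}{\sharp G}\sum_{g \in G} \mathrm{tr}(\tau(\rho(g)))$ and $\frac{1}{\sharp G}\sum_{g \in G} \mathrm{tr}(\tau(\rho(g^{-1})))$, which coincide under $g \leftrightarrow g^{-1}$.)

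There is no serious obstacle here; the only point deserving care is that the dual representation $\tau^\vee$ does land in $\GL_n(\cO_K)$ (it is the transpose-inverse), so that $\ba_{\tau^\vee}$ is defined by the convention $\ba_{\tau^\vee} = \ba_{\iota \circ \tau^\vee}$ introduced at the start of the section. Everything else is a direct application of the lemma plus semisimplicity.
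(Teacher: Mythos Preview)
Your proof is correct and follows the same overall reduction as the paper's: apply the preceding lemma to turn both sides into $\codim\,(K^n)^G$ and $\codim\,((K^n)^\vee)^G$, then show these agree. The only difference is in how the final equality of invariant dimensions is justified. The paper exploits that in this tame setup $G$ is cyclic with generator $g$, so $\codim\,(K^n)^G$ is simply the number of eigenvalues of $g$ on $K^n$ different from $1$; passing to the dual inverts each eigenvalue and hence preserves this count. Your semisimplicity/character argument reaches the same conclusion without using cyclicity --- slightly more machinery than needed here, but perfectly valid and a touch more general.
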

\begin{proof}
From the preceding lemma, $\ba_{\tau}(\rho)$ is equal to the number
of eigenvalues of the $g$-action on $K^{n}$ not equal to 1. This
number does not change by passing to the dual representation, hence
the corollary holds.
\end{proof}
For a positive integer $l$ and a uniformizer $\pi$ of $K$, we can
write $L_{\rho}=K(\sqrt[l]{\pi})$. 
\begin{lem}[{cf. \cite[Example 6.7]{Yasuda:2013fk}}]
\label{lem: v tame explicit}Let $\zeta\in K$ be the primitive $l$-th
root of unity such that $g(\sqrt[l]{\pi})=\zeta\cdot\sqrt[l]{\pi}$,
and let $\zeta^{a_{1}},\dots,\zeta^{a_{n}}$, $0\le a_{i}<l$, be
the eigenvalues of $g\in\GL nK$. Then 
\[
\bv_{\tau}(\rho)=\frac{1}{l}\cdot\sum_{i=0}^{n}a_{i}.
\]
\end{lem}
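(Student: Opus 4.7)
The strategy is to reduce to the simplest possible $(\Gamma,\tau,\rho)$ via convertibility and then describe $\Xi_{\rho}$ explicitly in a well-chosen basis. Two applications of convertibility of $\bv_{\bullet}$ let me assume first (by factoring $\rho$ through its image) that $\rho:G_{K}\twoheadrightarrow\Gamma$ is surjective, and then (by factoring $\tau$ through $\Gamma/\Ker\tau$) that $\tau$ is faithful. Under these reductions $\Gamma$ becomes the cyclic group $G=\Gal(L_{\rho}/K)$ of order $l$, the torsor $T_{\rho}$ is connected, $M_{\rho}=L_{\rho}=K(\pi_{L})$ is a field, and $\cO_{M_{\rho}}=\cO_{L}=\cO_{K}[\pi_{L}]$, where $\pi_{L}:=\sqrt[l]{\pi}$.

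Next, since $\gcd(l,\characteristic\kappa)=1$ and $\kappa$ is algebraically closed, $l$ is a unit in $\cO_{K}$ and the $l$-th root of unity $\zeta$ lifts by Hensel's lemma to $\cO_{K}^{\times}$. Consequently the idempotents
\[
\varepsilon_{a}:=\frac{1}{l}\sum_{k=0}^{l-1}\zeta^{-ka}\tau(g)^{k}\in M_{n}(\cO_{K})
\]
project $\cO_{K}^{\oplus n}$ onto the $\zeta^{a}$-eigenspace of $\tau(g)$, and give a decomposition of $\cO_{K}^{\oplus n}$ into free $\cO_{K}$-direct summands. Since $\cO_{K}$ is a DVR, each summand is free, so I obtain an $\cO_{K}$-basis $\epsilon_{1},\dots,\epsilon_{n}$ of $\cO_{K}^{\oplus n}$ with $\tau(g)\epsilon_{i}=\zeta^{a_{i}}\epsilon_{i}$.

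With this basis, $x=\sum_{i}x_{i}\epsilon_{i}\in\cO_{L}^{\oplus n}$ lies in $\Xi_{\rho}$ iff $g(x_{i})=\zeta^{a_{i}}x_{i}$ for every $i$. Expanding $x_{i}=\sum_{k=0}^{l-1}c_{ik}\pi_{L}^{k}$ with $c_{ik}\in K$ and using $g(\pi_{L})=\zeta\pi_{L}$, the eigenvector condition forces $c_{ik}=0$ for $k\not\equiv a_{i}\pmod{l}$; integrality of $x_{i}$ together with $0\le a_{i}<l$ then pins $x_{i}\in\cO_{K}\cdot\pi_{L}^{a_{i}}$. Hence
\[
\Xi_{\rho}=\bigoplus_{i=1}^{n}\cO_{K}\pi_{L}^{a_{i}}\epsilon_{i},\qquad\cO_{L}\cdot\Xi_{\rho}=\bigoplus_{i=1}^{n}\pi_{L}^{a_{i}}\cO_{L}\epsilon_{i},
\]
and the quotient $\cO_{L}^{\oplus n}/\cO_{L}\cdot\Xi_{\rho}\cong\bigoplus_{i}\cO_{L}/\pi_{L}^{a_{i}}\cO_{L}$ has length $\sum_{i}a_{i}$. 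Dividing by $\sharp\Gamma=l$ yields the formula. The only technical hurdle I anticipate is the simultaneous diagonalization of $\tau(g)$ over $\cO_{K}$ (rather than merely over $K$); once that is in place, the description of $\Xi_{\rho}$ and the length count are immediate.
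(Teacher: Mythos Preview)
Your proof is correct and follows essentially the same route as the paper: diagonalize $\tau(g)$ over $\cO_{K}$ via the averaging idempotents $\varepsilon_{a}=\frac{1}{l}\sum_{k}\zeta^{-ka}\tau(g)^{k}$, then read off $\Xi_{\rho}=\bigoplus_{i}\cO_{K}\pi_{L}^{a_{i}}\epsilon_{i}$ and the length. Your explicit reduction via convertibility (to make $M_{\rho}=L_{\rho}$ a field) is a point the paper leaves implicit, but otherwise the arguments match.
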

\begin{proof}
We first claim that $g$ is diagonalizable over $\cO_{K}$. Let $\xi$
be a power of $\zeta$ and $M_{\xi}\subset(\cO_{K})^{\oplus n}$ be
the $\xi$-eigenmodule of $g$. The inclusion map $M_{\xi}\hookrightarrow(\cO_{K})^{\oplus n}$
admits a splitting,
\[
\sigma_{\xi}:(\cO_{K})^{\oplus n}\to M_{\xi},\, x\mapsto\frac{1}{l}\sum_{i=0}^{l-1}g^{i}\xi^{-i}x.
\]
The product of $\sigma_{\zeta^{i}}$, $0\le i<l$ gives the inverse
of 
\[
\bigoplus_{i=0}^{l-1}M_{\zeta^{i}}\to(\cO_{K})^{\oplus n},\,(x_{i})\mapsto\sum x_{i}.
\]
This shows the claim. If we suppose $g=\diag(\zeta^{a_{1}},\dots,\zeta^{a_{n}})$,
we can choose 
\[
(\sqrt[l]{\pi}^{a_{1}},0,\dots,0),\dots,(0,\dots,0,\sqrt[l]{\pi}^{a_{n}})
\]
as a basis of the tuning submodule $\Xi_{\rho}\subset(\cO_{L_{\rho}})^{\oplus n}$.
This proves the lemma.
\end{proof}
Note that if we write the eigenvalues $\zeta^{a_{i}}$, $0\le a_{i}<l$
as $\zeta^{b_{i}}$, $0<b_{i}\le l$, then 
\[
\bw_{\tau}(\rho)=\sharp\{i\mid a_{i}\ne0\}-\frac{1}{l}\sum_{i=1}^{n}a_{i}=n-\frac{1}{l}\sum_{i=1}^{n}b_{i}.
\]

\begin{defn}
We say that the representation $\tau$ is \emph{balanced }if for every
$\gamma\in\Gamma,$ every non-real eigenvalue of $\tau(\gamma)$ appears
in a pair with its inverse, that is, $\tau(\gamma)$ have eigenvalues
\[
\epsilon_{1},\epsilon_{1}^{-1},\epsilon_{2},\epsilon_{2}^{-1},\dots,\epsilon_{m},\epsilon_{m}^{-1},1,\dots,1,-1,\dots,-1
\]
up to permutation.
\end{defn}
The following are examples of balanced representations:
\begin{enumerate}
\item a representation defined over a field which can be embedded in $\RR$.
\item a permutation representation.
\item a self-dual representation, that is, a representation which is isomorphic
to its dual. 
\item $\tau\oplus\tau^{\vee}$ for any $\tau$.\end{enumerate}
\begin{prop}
\label{prop:tame-compare}Suppose that $\tau$ is balanced. Then 
\[
\bw_{\tau}=\bv_{\tau}=\frac{1}{2}\cdot\bar{\bt}_{\tau}=\frac{1}{2}\cdot\bt_{\tau}=\frac{1}{2}\cdot\ba_{\tau}.
\]
\end{prop}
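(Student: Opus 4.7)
The plan is to express every quantity in the statement in terms of the eigenvalues of a generator $g$ of the cyclic group $G = \Im(\tau\circ\rho)$, using the two preceding lemmas, and then exploit the symmetry imposed by the balanced hypothesis. From the first of those lemmas, $\ba_\tau(\rho) = \bt_\tau(\rho) = \codim(K^n)^G$, which is precisely the number of eigenvalues of $g$ on $K^n$ different from $1$. I would verify $\bar\bt_\tau(\rho) = \bt_\tau(\rho)$ in this tame setting as follows: the stabilizer $\Gamma' = \Im\rho \subseteq \Gamma$ surjects onto $G \subseteq \GL n{\cO_K}$, so the $\Gamma'$-representation on $\kappa^n$ factors through the mod-$\fm$ reduction of $G$. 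By the diagonalization argument used in Lemma \ref{lem: v tame explicit}, $g$ is $\cO_K$-diagonalizable with eigenvalues $\zeta^{a_i}$ that are $l$-th roots of unity ($l = |G|$ prime to the residue characteristic), so $\bar\zeta$ remains a primitive $l$-th root of unity in $\kappa$ and reduction preserves the condition ``eigenvalue equals $1$''. Hence $\codim(\kappa^n)^{\Gamma'} = \codim(K^n)^G$, giving the three equalities involving $\ba_\tau$, $\bt_\tau$, and $\bar\bt_\tau$.

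Next I would compute $\bv_\tau(\rho) = \tfrac{1}{l}\sum_i a_i$ from Lemma \ref{lem: v tame explicit} and use the balanced hypothesis to force the eigenvalues into symmetric packets. Non-real eigenvalues pair as $\zeta^{a},\zeta^{l-a}$, each pair contributing $l$ to $\sum_i a_i$; eigenvalues equal to $+1$ (i.e.\ $a_i = 0$) contribute $0$, and (only when $l$ is even) each eigenvalue equal to $-1$ (i.e.\ $a_i = l/2$) contributes $l/2$. Writing $k_{\pm 1}$ for the number of $\pm 1$ eigenvalues, the number of non-real pairs is $(n - k_1 - k_{-1})/2$, so
\[
\sum_i a_i \;=\; l\cdot\frac{n - k_1 - k_{-1}}{2} + \frac{l}{2}\,k_{-1} \;=\; \frac{l}{2}(n - k_1),
\]
and therefore $\bv_\tau(\rho) = \tfrac{1}{2}(n - k_1) = \tfrac{1}{2}\codim(K^n)^G = \tfrac{1}{2}\ba_\tau(\rho)$.

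Combining the two steps, $\bw_\tau = \bar\bt_\tau - \bv_\tau = \ba_\tau - \tfrac{1}{2}\ba_\tau = \tfrac{1}{2}\ba_\tau = \bv_\tau$, closing the chain of equalities. The only subtle point is the bookkeeping in the eigenvalue count: one has to separate carefully the contribution of the self-paired real eigenvalues $\pm 1$ from that of the genuinely paired non-real eigenvalues, and it is precisely the balanced hypothesis that makes these contributions combine into the clean identity $\sum a_i = (l/2)(n - k_1)$ -- without balancedness the equality $\bv_\tau = \tfrac{1}{2}\ba_\tau$ already fails for a one-dimensional tamely ramified representation.
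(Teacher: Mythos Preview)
Your argument is correct and follows essentially the same route as the paper's own proof: both reduce to the eigenvalue list of a generator $g$, pair the non-real eigenvalues as $\zeta^{a},\zeta^{l-a}$ via the balanced hypothesis to obtain $\bv_\tau=\tfrac12\ba_\tau$, and check $\bar\bt_\tau=\bt_\tau$ by noting that the $l$-th roots of unity (with $l$ prime to the residue characteristic) reduce injectively to $\kappa$. Your write-up is slightly more explicit about why $\Gamma'$ acts through the reduction of $G$ and about the bookkeeping $\sum a_i=\tfrac{l}{2}(n-k_1)$, but there is no substantive difference in method.
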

\begin{proof}
With the notation as above, the generator $g$ of $G=\Gal(L_{\rho}/K)$
has eigenvalues 
\[
\zeta^{b_{1}},\zeta^{l-b_{1}},\cdots,\zeta^{b_{m}},\zeta^{l-b_{m}},\overset{r}{\overbrace{1,\dots,1}},\overset{s}{\overbrace{-1,\dots,-1}}\quad(0<b_{i}<l).
\]
Then $\ba_{\tau}(\rho)=2m+s$, while 
\[
\bv_{\tau}(\rho)=\frac{1}{l}\sum_{i=1}^{m}(b_{i}+(l-b_{i}))+\frac{1}{l}\sum_{j=1}^{s}\frac{l}{2}=m+\frac{s}{2}.
\]
We have proved that $\bv_{\tau}=\ba_{\tau}/2$.

Since nontrivial eigenvalues of $g\in\GL n{\cO_{K}}\subset\GL nK$
stay nontrivial by passing to the residue field $\kappa$, we have
\[
\bar{\bt}{}_{\tau}(\rho)=\codim\,(\kappa^{n})^{g}=\codim\,(K^{n})^{g}=\bt_{\tau}(\rho).
\]
This proves the remaining equalities.\end{proof}
\begin{cor}
\label{cor:compare tame w a}Provided that $\sharp\Gamma$ is prime
to the residue characteristic, we have that for any $\tau$, 
\[
\bw_{\tau\oplus\tau^{\vee}}=\ba_{\tau}.
\]
\end{cor}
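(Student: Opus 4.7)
The plan is to reduce the claim directly to Proposition~\ref{prop:tame-compare} by observing that $\tau \oplus \tau^{\vee}$ belongs to the list of balanced representations already recorded in the paper (item (4) in that list). Once this is noted, the proof becomes a short chain of formal manipulations using completeness of the relevant counting systems together with the equality $\ba_{\tau} = \ba_{\tau^{\vee}}$ established in the tame case.

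Concretely, I would proceed as follows. First, since $\tau \oplus \tau^{\vee}$ is balanced, Proposition~\ref{prop:tame-compare} gives
\[
\bw_{\tau \oplus \tau^{\vee}} = \tfrac{1}{2}\cdot \ba_{\tau \oplus \tau^{\vee}}.
\]
Next, by additivity of the counting system $\ba_{\bullet}$ (one of the conditions in Definition~\ref{def: complete}, which holds by the proposition asserting completeness of $\ba_{\bullet}$), we have
\[
\ba_{\tau \oplus \tau^{\vee}} = \ba_{\tau} + \ba_{\tau^{\vee}}.
\]
Finally, the corollary proved just after the first lemma of the tame subsection yields $\ba_{\tau} = \ba_{\tau^{\vee}}$, so the right-hand side equals $2\,\ba_{\tau}$, and combining gives $\bw_{\tau \oplus \tau^{\vee}} = \ba_{\tau}$.

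There is essentially no obstacle: the only conceptual point is verifying that $\tau \oplus \tau^{\vee}$ is balanced, which is immediate because eigenvalues of $\tau^{\vee}(\gamma)$ are the inverses of those of $\tau(\gamma)$, so eigenvalues of $(\tau \oplus \tau^{\vee})(\gamma)$ come in inverse pairs by construction. The rest is a straightforward application of already-proved facts, and the write-up should be only a few lines.
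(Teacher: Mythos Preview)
Your proposal is correct and follows exactly the same route as the paper's proof: observe that $\tau\oplus\tau^{\vee}$ is balanced, apply Proposition~\ref{prop:tame-compare} to get $\bw_{\tau\oplus\tau^{\vee}}=\tfrac{1}{2}\ba_{\tau\oplus\tau^{\vee}}$, then use additivity of $\ba_{\bullet}$ together with $\ba_{\tau}=\ba_{\tau^{\vee}}$. The paper's own write-up is essentially your three lines compressed into two sentences.
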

\begin{proof}
Since $\tau\oplus\tau^{\vee}$ is balanced,
\[
\bw_{\tau\oplus\tau^{\vee}}=\frac{1}{2}\cdot\ba_{\tau\oplus\tau^{\vee}}.
\]
Since $\ba_{\bullet}$ is additive and $\ba_{\tau}=\ba_{\tau^{\vee}}$,
the corollary follows.\end{proof}
\begin{rem}
Without the tameness condition, $\bt_{\bullet}$ and $\bar{\bt}_{\bullet}$
do not generally coincide. For instance, if $K=\kappa((t))$ with
$\kappa$ a perfect field of characteristic $p>0$, then for the representation
\[
\tau:\ZZ/p\xrightarrow{\sim}\left\langle \begin{pmatrix}1 & t\\
0 & 1
\end{pmatrix}\right\rangle \subset\GL 2{\kappa[[t]]}
\]
and $\rho\in S_{K,\ZZ/p}$ defining a ramified $\ZZ/p$-extension
of $K$, we have 
\[
\bt_{\tau}(\rho)=1\text{ and }\bar{\bt}_{\tau}(\rho)=0.
\]

\end{rem}

\subsection{Permutation representations}

We now drop the requirement that $\sharp\Gamma$ is prime to the residue
characteristic of $K$, and suppose instead that $\tau:\Gamma\to\GL n{\cO_{K}}$
is a permutation representation. If it is tame, then it is balanced
and Proposition \ref{prop:tame-compare} applies. Otherwise, the assertion
of the proposition fails. However some equalities are still valid. 
\begin{thm}
\label{thm:permutation}For a permutation representation $\tau$,
we have
\[
\bar{\bt}_{\tau}=\bt_{\tau}\text{ and }\bv_{\tau}=\frac{1}{2}\cdot\ba_{\tau}.
\]
\end{thm}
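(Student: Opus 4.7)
My plan is to dispatch both equalities after a standard sequence of reductions licensed by the completeness of $\bv_\bullet$, $\ba_\bullet$, $\bt_\bullet$, and $\bar{\bt}_\bullet$. By geometricity I may replace $K$ with its maximal unramified extension, so the residue field becomes separably closed. Then, combining additivity (to peel off the $\Gamma$-orbits in $\tau$) with convertibility (to quotient by $\ker\tau$ and restrict along $\Im\rho \hookrightarrow \Gamma$), I may assume $\rho$ is surjective and $\tau:\Gamma \hookrightarrow S_n$ is faithful and transitive. In this setup $M_\rho = L_\rho$ is a totally ramified Galois $\Gamma$-extension of $K$ and $E_\rho := M_\rho^H$ is a totally ramified degree-$n$ field extension, where $H \subset \Gamma$ is the point stabilizer.

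The first equality $\bar{\bt}_\tau = \bt_\tau$ is then immediate: in the reduced setting $\Gamma' = \Gamma$ and $G_0 = \tau(\Gamma)$, and transitivity forces both $(K^n)^{G_0}$ and $(\kappa^n)^{G_0}$ to be the one-dimensional line spanned by the indicator of the single orbit, giving $\bar{\bt}_\tau = n - 1 = \bt_\tau$.

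For the second equality, the structural heart is to identify $\cO_{M_\rho}\cdot\Xi_\rho$, sitting inside $\cO_{M_\rho}^n \cong \cO_{E_\rho \otimes_K M_\rho}$ via the Galois splitting $E_\rho \otimes_K M_\rho \cong M_\rho^n$, with the image of $\cO_{E_\rho} \otimes_{\cO_K} \cO_{M_\rho}$. The suborder $\cO_{E_\rho}\otimes_{\cO_K}\cO_{M_\rho}$ has relative discriminant $d_{E_\rho/K}\cdot\cO_{M_\rho}$ while the maximal order $\cO_{M_\rho}^n$ is étale (trivial discriminant), so the classical inclusion-of-orders formula yields
\[
\length\bigl(\cO_{M_\rho}^n/\cO_{M_\rho}\cdot\Xi_\rho\bigr) = \tfrac{1}{2}v_{M_\rho}(d_{E_\rho/K}) = \tfrac{|\Gamma|}{2}v_K(d_{E_\rho/K}),
\]
where the second equality uses $v_{M_\rho}|_K = |\Gamma|\cdot v_K$ by total ramification. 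Dividing by $|\Gamma|$ and invoking Lemma \ref{lem:Kedlaya} gives $\bv_\tau(\rho) = \tfrac{1}{2}v_K(d_{E_\rho/K}) = \tfrac{1}{2}\ba_\tau(\rho)$. The main delicacy is identifying $\cO_{M_\rho}\cdot\Xi_\rho$ with $\cO_{E_\rho}\otimes_{\cO_K}\cO_{M_\rho}$; granted that, the rest is a standard tower-of-orders discriminant calculation.
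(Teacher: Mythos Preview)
Your proposal is correct and matches the paper's argument almost exactly: both reduce via completeness to the case where $\rho$ is surjective and $\tau:\Gamma\hookrightarrow S_n$ is faithful and transitive, then compute $\bv_\tau$ through the discriminant of $\cO_{E_\rho}$ and finish with Lemma~\ref{lem:Kedlaya}. The step you flag as the ``main delicacy'' and leave granted---the identification of $\cO_{M_\rho}\cdot\Xi_\rho$ with the image of $\cO_{E_\rho}\otimes_{\cO_K}\cO_{M_\rho}$ under the Galois splitting---is precisely what the paper supplies explicitly: unwinding the defining condition $\delta(\gamma)(x)=\tilde\tau(\gamma)(x)$ for a permutation action yields $\Xi_\rho=\{(\sigma_1(e),\dots,\sigma_n(e))\mid e\in\cO_{E_\rho}\}$, after which the paper reads off $\det(\sigma_j(e_i))^2=d_{E_\rho/K}$ directly (your index--discriminant formula for orders is the same identity repackaged).
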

\begin{proof}
The first equality is obvious. For the second, let $\rho\in S_{K,\Gamma}.$
In order to compute both sides of the second equality, since $\ba_{\bullet}$
and $\bv_{\bullet}$ are complete, we may reduce to the case where
$\tau$ is an inclusion map, $\Gamma$ transitively acts on $\{1,\dots,n\}$
and $\rho$ is surjective. Let $T_{\rho}=\Spec M_{\rho}\to\Spec K$
be the $\Gamma$-torsor corresponding to $\rho$. From the $\Gamma$-action
on $\{1,\dots,n\},$ $\rho$ defines also an \'etale extension $E_{\rho}/K$
of degree $n$. From the assumption we have just made, $M_{\rho}$
and $E_{\rho}$ are fields. Let $\Delta\subset\Gamma$ be the stabilizer
of $1\in\{1,\dots,n\}$. Then we can identify $E_{\rho}$ with the
$\Delta$-invariant subfield $M_{\rho}^{\Delta}\subset M_{\rho}$.
Let $\sigma_{i}\in\Delta$, $i=1,\dots,n,$ be such that $\sigma_{i}(1)=i.$
Then, by definition,
\[
\Xi_{\rho}=\{(\sigma_{1}(e),\dots,\sigma_{n}(e))\in\cO_{M_{\rho}}^{\oplus n}\mid e\in\cO_{E_{\rho}}\}.
\]
Note that $\sigma_{i}(e)$ is independent of the choice of $\sigma_{i}$.
If $e_{1},\dots,e_{n}$ are a basis of $\cO_{E_{\rho}}$ as an $\cO_{K}$-module,
then $ $$(\sigma_{1}(e_{i}),\dots,\sigma_{n}(e_{i})),$ $i=1,\dots,n$,
are a basis of $\Xi_{\rho}$. Since 
\[
d_{E_{\rho}/K}=\det(\sigma_{j}(e_{i}))^{2},
\]
we have 
\[
\bv_{\tau}(\rho)=\frac{1}{n!}\cdot v_{M_{\rho}}(\det(\sigma_{j}(e_{i})))=\frac{1}{2}\cdot v_{K}(d_{E_{\rho}/K}).
\]
Lemma \ref{lem:Kedlaya} completes the proof. \end{proof}
\begin{cor}
\label{cor:comparison twice}For a permutation representation $\tau$,
let $2\tau:=\tau\oplus\tau.$ Then
\[
\bw_{2\tau}=2\cdot\bt_{\tau}-\ba_{\tau}=\bt_{\tau}-\bs_{\tau}.
\]

\end{cor}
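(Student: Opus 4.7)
The plan is to combine Theorem \ref{thm:permutation} (applied to $2\tau$ rather than $\tau$) with the additivity properties coming from completeness of the counting systems $\bw_\bullet$, $\ba_\bullet$, $\bt_\bullet$, $\bs_\bullet$, $\bar{\bt}_\bullet$, and $\bv_\bullet$. The key observation is that $2\tau = \tau \oplus \tau$ is itself a permutation representation: if $\tau$ corresponds to a $\Gamma$-action on a finite set $S$, then $2\tau$ corresponds to the $\Gamma$-action on the disjoint union $S \sqcup S$. Hence Theorem \ref{thm:permutation} applies directly to $2\tau$.

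First, I would unfold the definition of the weight:
\[
\bw_{2\tau}(\rho) = \bar{\bt}_{2\tau}(\rho) - \bv_{2\tau}(\rho).
\]
Applying Theorem \ref{thm:permutation} to the permutation representation $2\tau$ gives $\bar{\bt}_{2\tau} = \bt_{2\tau}$ and $\bv_{2\tau} = \tfrac{1}{2}\ba_{2\tau}$, so
\[
\bw_{2\tau} = \bt_{2\tau} - \tfrac{1}{2}\ba_{2\tau}.
\]
Next I would invoke additivity of $\ba_\bullet$ and $\bt_\bullet$ (Definition \ref{def: complete}, and the fact that $\ba_\bullet$, $\bs_\bullet$, $\bt_\bullet$ are complete) to rewrite $\bt_{2\tau} = 2\bt_\tau$ and $\ba_{2\tau} = 2\ba_\tau$. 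Substituting yields
\[
\bw_{2\tau} = 2\bt_\tau - \ba_\tau,
\]
which is the first claimed equality.

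For the second equality, I would simply use the definition $\bt_\tau = \ba_\tau - \bs_\tau$ given at the end of Section 2.3, so that
\[
2\bt_\tau - \ba_\tau = 2\bt_\tau - (\bt_\tau + \bs_\tau) = \bt_\tau - \bs_\tau,
\]
completing the proof.

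There is no real obstacle here; the content of the corollary is entirely hidden inside Theorem \ref{thm:permutation}. The only point that requires mild care is the verification that $2\tau$ is again a permutation representation, so that Theorem \ref{thm:permutation} may be applied with $2\tau$ in place of $\tau$; once this is noted, the rest is bookkeeping via additivity.
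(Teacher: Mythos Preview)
Your proof is correct and matches the paper's intended argument: the corollary is stated without proof precisely because it follows immediately from Theorem~\ref{thm:permutation} together with additivity and the relation $\ba_\tau=\bt_\tau+\bs_\tau$. A minor simplification is that you need not observe $2\tau$ is a permutation representation---applying Theorem~\ref{thm:permutation} to $\tau$ itself and then using additivity of $\bar{\bt}_\bullet$ and $\bv_\bullet$ (equivalently of $\bw_\bullet$) yields $\bw_{2\tau}=2\bw_\tau=2\bt_\tau-\ba_\tau$ just as well.
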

For permutation representations, the expression $\bw_{2\tau}=\bt_{\tau}-\bs_{\tau}$
shows that the tame part contributes positively to the weight, while
the $p$-part contributes negatively, as observed in \cite{Yasuda:2013fk}
in some other cases. It is in contrast with that both the tame and
$p$- parts contribute positively to the Artin conductor: $\ba_{\tau}=\bt_{\tau}+\bs_{\tau}$.
\begin{example}
Theorem \ref{thm:permutation} and Corollary \ref{cor:comparison twice}
fail without the assumption that $\tau$ is a permutation representation.
Suppose that $K$ has characteristic $p>0$ and $\Gamma=\left\langle \gamma\right\rangle \cong\ZZ/(p).$
For $n\le p$, consider the representation $\tau:\Gamma\to\GL n{\cO_{K}}$
with 
\[
\tau(\gamma)=\begin{pmatrix}1 & 1\\
 & 1 & 1\\
 &  & \ddots & \ddots\\
 &  &  & 1 & 1\\
 &  &  &  & 1
\end{pmatrix}.
\]
For $\rho\in S_{K,\Gamma}$ with $L_{\rho}/K$ ramified, let $j$
be the largest integer $i$ with $G_{i}\ne1.$ By an easy calculation,
we have 
\begin{gather*}
\ba_{\tau}(\rho)=(j+1)(n-1)\text{ and}\\
\bt_{\tau}(\rho)=\bar{\bt}_{\tau}(\rho)=n-1.
\end{gather*}
On the other hand, from \cite[Example 6.8]{Yasuda:2013fk}, we have
\[
\bw_{\tau}(\rho)=-\sum_{a=1}^{n-1}\left\lfloor \frac{ja}{p}\right\rfloor .
\]
For instance, if $n=2$, then 
\[
2\bt_{\tau}-\ba_{\tau}=1-j,
\]
while 
\[
\bw_{2\tau}=-2\left\lfloor \frac{j}{p}\right\rfloor .
\]
They are not equal unless $p=2$.
\end{example}

\section{The McKay correspondence}

In this section, we discuss several versions of the McKay correspondence
in terms of motivic invariants. We also formulate \emph{point counting
realizations} of these correspondences and prove them in some cases.

\subsection{The Grothendieck ring of varieties and the point counting realization}

We first describe the set to which motivic invariants belong. Let
$\kappa$ be a perfect field. The Grothendieck ring of $\kappa$-varieties,
denoted $K_{0}(\Var_{\kappa})$, is the free abelian group of isomorphic
classes $[V]$ of $\kappa$-varieties modulo the following relation:
if $W$ is a closed subvariety of $V$, then $[V]=[W]+[V\setminus W]$.
This indeed becomes a ring by the multiplication defined by $[V]\cdot[V']:=[V\times_{\kappa}V']$.
For a constructible subset $C\subset V$, its class $[C]$ in $K_{0}(\Var_{\kappa})$
is defined in a natural way. We define a distinguished element $\LL:=[\AA_{\kappa}^{1}]$
of $K_{0}(\Var_{\kappa})$. If $\kappa=\FF_{q}$, then there exists
a unique ring map
\[
\sharp:K_{0}(\Var_{\kappa})\to\ZZ
\]
sending $[V]$ to $\sharp V(\kappa)$, which is called the \emph{point
counting realization.}

Motivic integrals which we will consider below take values in a certain
modification of $K_{0}(\Var_{\kappa})$, the semiring $\mathfrak{R}^{1/r}$
in \cite[Section 3.8]{MR2271984}, where $r$ is the order of the
given finite group $\Gamma$. Alternatively, we may use the complete
Grothendieck ring of mixed $G_{\kappa}$-representations over $\QQ_{l}$,
$\hat{K}_{0}(MR(G_{\kappa},\QQ_{l}))$, with $l$ a prime number prime
to $r$ and to the characteristic of $\kappa$. As in $K_{0}(\Var_{\kappa})$,
there exists an element $[V]\in\mathfrak{R}^{1/r}$ associated to
each $\kappa$-variety $V$. If $W$ is a closed subvariety of $V$,
then we again have $[V]=[W]+[V\setminus W]$. The product of elements
$[V]$ and $[V']$ is similarly given. Moreover $\mathfrak{R}^{1/r}$
contains fractional powers $\LL^{i}$ of $\LL=[\AA_{\kappa}^{1}]$
for $i\in\frac{1}{r}\ZZ$. For $i\in\frac{1}{r}\ZZ$ with $i>0$,
the infinite sum 

\[
1+\LL^{-i}+\LL^{-2i}+\cdots
\]
converges in $\mathfrak{R}^{1/r}$. $ $Let $\mathfrak{R}_{0}^{1/r}$
be the subsemiring of $\mathfrak{R}^{1/r}$ generated by the classes
$[V]$ of $\kappa$-varieties, fractional powers $\LL^{i}$, $i\in\frac{1}{r}\ZZ$
of $\LL$ and the infinite sums of the above form. We then have the
point counting realization, 
\[
\sharp:\mathfrak{R}_{0}^{1/r}\to\RR,
\]
which sends $[V]$ to $\sharp V(\kappa)$, $\LL^{-i}$ to $q^{-i}$,
and the infinite sum $1+\LL^{-i}+\LL^{-2i}+\cdots$ to 
\[
1+q^{-i}+q^{-2i}+\cdots=\frac{1}{1-q^{-i}}.
\]
We can check that this map is actually well defined for instance by
using the realization map $\mathfrak{R}^{1/r}\to\hat{K}_{0}(MR(G_{\kappa},\QQ_{l}))$
given in \cite[Section 3.8]{MR2271984} (see also \cite[page 1142]{MR3230848}).

\subsection{The tame case}

The classical McKay correspondence is about ADE surface singularities
over $\CC$ and originates in McKay's observation in the late 70 that
the same Dynkin diagram appears both in the minimal resolution of
singularities and in representation theory. Later, it has been refined
and generalized in diverse directions. The first hint of a generalization
to higher dimensions came from physics \cite{MR851703,MR818423}.
For more details on the McKay correspondence in general, we refer
the reader to a nice survey paper \cite{MR1886756}. 

A version of the McKay correspondence in terms of motivic invariants
was obtained by Batyrev \cite{MR1677693}, where the base field was
$\CC$. His result was a considerable refinement of the \textquotedblleft{}Physicists\textquoteright{}
Euler number conjecture\textquotedblright{}. A slightly different
version was obtained by Denef-Loeser \cite{MR1905024} over a field
of characteristic zero containing all $\sharp\Gamma$-th roots of
unity with $\Gamma$ the given finite group. Yasuda \cite{MR2271984}
then generalized it to an arbitrary perfect field of characteristic
prime to $\sharp\Gamma$. Now we recall these results. For the sake
of intelligibility, we first see the case of a perfect field (possibly
of positive characteristic) containing all $\sharp\Gamma$-th roots
of unity. 

Let $\kappa$ be a perfect field and $\tau:\Gamma\hookrightarrow\GL n{\kappa}$
a faithful representation of a finite group $\Gamma$, through which
we regard $\Gamma$ as a subgroup of $\GL n{\kappa}$. The group $\Gamma$
acts on the polynomial ring $\kappa[x_{1},\dots,x_{n}]$ by naturally
extending the $\Gamma$-action on the linear part $\bigoplus_{i}\kappa\cdot x_{i}\cong\kappa^{n}$.
Let $\AA_{\kappa}^{n}:=\Spec\kappa[x_{1},\dots,x_{n}]$ and $X:=\AA_{\kappa}^{n}/\Gamma$
the quotient scheme. Since $X$ is a $\QQ$-Gorenstein variety over
$\kappa$, for a resolution of singularities $f:Y\to X,$ we can define
the relative canonical divisor $K_{Y/K}:=K_{Y}-f^{*}K_{X}$, which
is a $\QQ$-divisor with support contained in the exceptional locus.
We say that $f$ is \emph{crepant }if $K_{Y/K}=0.$ We let $0\in X(\kappa)$
be the image of the origin of $\AA_{\kappa}^{n}$. 

We suppose that $\kappa$ contains all $\sharp\Gamma$-th roots of
unity and choose a primitive $\sharp\Gamma$-th root $\zeta$. 
\begin{defn}
\label{def: weight matrix}For $g\in\Gamma\subset\GL n{\kappa}$,
if it is equivalent to the diagonal matrix $\diag(\zeta^{b_{1}},\dots,\zeta^{b_{n}})$
with $1\le b_{i}\le\sharp\Gamma$, we put 
\[
\bw_{\tau}(g):=n-\frac{1}{\sharp\Gamma}\sum_{i=1}^{n}b_{i}\in\QQ.
\]

\end{defn}
The rational number $\bw_{\tau}(g)$ depends only on $\tau$ and the
conjugacy class of $g$. We let $\Conj{\Gamma}$ be the set of conjugacy
classes of $\Gamma$. The following is the McKay correspondence in
the simplest case:
\begin{thm}[\cite{MR1905024,MR2271984}]
\label{thm: McKay tame 1}Suppose that $\Gamma\subset\GL n{\kappa}$
has no pseudo-reflection. Namely, for any $g\in\Gamma\setminus\{1\}$,
the fixed point locus $(\kappa^{n})^{g}$ has codimension at least
two. Suppose that $\sharp\Gamma$ is prime to the characteristic of
$\kappa$ and that $\kappa$ contains all $\sharp\Gamma$-th roots
of unity. Then, for a crepant resolution $f:Y\to X$, we have
\[
[f^{-1}(0)]=\sum_{[g]\in\Conj{\Gamma}}\LL^{\bw_{\tau}(g)}\in\mathfrak{R}_{0}^{1/\sharp\Gamma}.
\]

\end{thm}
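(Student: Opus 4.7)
The plan is to prove the identity via motivic integration, comparing integrals on the crepant resolution $Y$ with integrals on the Deligne--Mumford quotient stack $\cX := [\AA_\kappa^n/\Gamma]$ through the change of variables formula of Yasuda \cite{MR2271984}. The basic idea is that $[E] := [f^{-1}(0)]$ can be recovered as the motivic measure of the cylinder of arcs of $J_\infty Y$ landing in $E$ at time zero, and after transporting this cylinder across $f$ the computation becomes a sum of sector integrals on the twisted jet stack of $\cX$ indexed by conjugacy classes of $\Gamma$.

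First, since $Y$ is smooth of dimension $n$, under the standard normalization of the motivic measure the cylinder of arcs through $E$ has measure $[E]$, so the task reduces to computing that measure on the $\cX$-side. The hypotheses (no pseudo-reflection and tameness) ensure that $\AA_\kappa^n\to X$ is \'etale in codimension one, hence $X$ is $\QQ$-Gorenstein and $K_{Y/X}$ is well defined and vanishes by crepancy. The change of variables formula then writes
\[
[E] = \int_{|J_\infty\cX|_{0}} \LL^{-s}\, d\mu_{\cX},
\]
where $|J_\infty\cX|_{0}$ is the twisted jet stack of $\cX$ localized over $0\in X$ and the shift $s$ is the age function coming from the DM structure of $\cX$, the crepancy making the usual discrepancy contribution vanish.

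The second ingredient is the sector decomposition. Tameness combined with the presence in $\kappa$ of all $\sharp\Gamma$-th roots of unity implies that every twisted jet has diagonalizable cyclic monodromy, and splits the twisted jet stack as
\[
|J_\infty\cX| = \bigsqcup_{[g]\in\Conj{\Gamma}} |J_\infty\cX|^{[g]}.
\]
For each class $[g]$, diagonalize $g=\diag(\zeta^{b_1},\dots,\zeta^{b_n})$ with $1\le b_i\le\sharp\Gamma$ and lift a twisted jet with monodromy $g$ to a $g$-equivariant map $\Spec\kappa[[t^{1/\sharp\Gamma}]]\to\AA_\kappa^n$ whose $t=0$ fibre lies in the fixed locus $(\AA^n)^g$. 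Localizing this fibre at $0$ and evaluating the cylinder integral together with its age shift produces $\LL^{n-\age(g)}=\LL^{\bw_\tau(g)}$ per sector; summing over $[g]\in\Conj{\Gamma}$ then yields the claimed formula.

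The main obstacle is the careful bookkeeping in the change of variables formula over the DM stack $\cX$: one must verify that each localized sector over $0$ contributes exactly $\LL^{\bw_\tau(g)}$ and that the age shift produced by motivic integration matches the combinatorial weight of Definition \ref{def: weight matrix}. A secondary technical point is that the fractional powers of $\LL$ in the age formula force working in the enriched semiring $\mathfrak{R}_0^{1/\sharp\Gamma}$, so each sector integral must be checked to land there before the sum is assembled.
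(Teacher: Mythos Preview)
Your proposal is correct and follows essentially the same route as the paper: compute a motivic integral two ways using the transformation rule of \cite{MR2271984}, once via the crepant resolution $Y\to X$ (yielding $[f^{-1}(0)]$) and once via the smooth Deligne--Mumford stack $\cX=[\AA_\kappa^n/\Gamma]\to X$ (yielding the sum over sectors indexed by $\Conj{\Gamma}$). The only presentational difference is that the paper makes the bridge explicit by introducing an auxiliary integral on the singular $X$ itself, with integrand $\LL^{\frac{1}{r}\mathrm{ord}\,\cG}$, and pulls it back separately along $f$ and along $\cX\to X$; it then deduces the present statement by specializing the general perfect-field version (Theorem~\ref{thm: McKay tame 2}) to the case where $\kappa$ contains all $\sharp\Gamma$-th roots of unity, so that the connected components of $\overline{(\cJ_{0}\cX)_{0}}$ are single $\kappa$-points indexed by $\Conj{\Gamma}$.
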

When $\kappa$ has characteristic zero and $\Gamma$ is contained
in $\mathrm{SL}_{n}(\kappa)$, the theorem was proved in \cite{MR1905024}.
To deduce the general case of the theorem from a result in \cite{MR2271984},
we need to first prove Theorem \ref{thm: McKay tame 2} below. 
\begin{rem}
The weight $\bw_{\tau}(g)$ defined above in the tame case is essentially
the same thing as what is called \emph{age \cite{MR1463181} }or \emph{fermion
number shift \cite{MR1233848}.}
\end{rem}
To generalize the result to the tame case over an arbitrary perfect
base field, we need the notion of \emph{twisted 0-jets} from the paper
\cite{MR2271984}, and in particular, need to use Deligne-Mumford
stacks. 

Let $l$ be a positive integer prime to the characteristic of $\kappa$.
For a Deligne-Mumford stack $\cZ$ over $\kappa$, the \emph{stack
of twisted $0$-jets of order $l$, $\cJ_{0}^{l}\cZ$, }is the Deligne-Mumford
stack whose points are given as follows. Let $\mu_{l,\kappa}:=\Spec\kappa[t]/(t^{l}-1)$
be the group scheme of $l$-th roots of unity defined over $\kappa$.
For a $\kappa$-algebra $R$, an $R$-point of $\cJ_{0}^{l}\cZ$ is
a representable $\kappa$-morphism of the form
\[
[\Spec R/\mu_{l,\kappa}]\to\cZ,
\]
where $\mu_{l,\kappa}$ acts on $\Spec R$ trivially and $[\Spec R/\mu_{l,\kappa}]$
is the associated quotient stack. The natural morphism $\Spec R\to[\Spec R/\mu_{l,\kappa}]$
gives a natural morphism $\cJ_{0}^{l}\cZ\to\cZ$. 
\begin{rem}
Twisted 0-jets are a special case of twisted $n$-jets given by representable
morphisms
\[
[(\Spec R[t]/(t^{nl+1}))/\mu_{l,\kappa}]\to\cZ.
\]
Twisted jets and twisted arcs, which are representable morphisms 
\[
[\Spec R[[t]]/\mu_{l,\kappa}]\to\cZ,
\]
naturally appeared in a generalization of motivic integration to Deligne-Mumford
stacks.
\end{rem}
Let $\cX:=[\AA_{\kappa}^{n}/\Gamma]$ be the quotient stack associated
to the $\Gamma$-action on $\AA_{\kappa}^{n}$. The natural morphism
$\AA_{\kappa}^{n}\to X$ factors through $\cX$ and the induced morphism
$\cX\to X$ is finite and birational. We put 
\[
\cJ_{0}\cX:=\bigsqcup_{p\nmid l}\cJ_{0}^{l}\cX.
\]
Since $\cJ_{0}^{l}\cX$ is empty for $l\nmid\sharp\Gamma$, the disjoint
union is a finite union. 
\begin{rem}
The stack $\cJ_{0}\cX$ is a twisted form of the inertia stack $I\cX:=\cX\times_{\Delta,\cX\times\cX,\Delta}\cX$
with $\Delta:\cX\to\cX\times\cX$ the diagonal morphism. Namely $\cJ_{0}\cX$
and $I\cX$ become isomorphic after the base change from $\kappa$
to an algebra closure $\bar{\kappa}$.
\end{rem}
We put $(\cJ_{0}\cX)_{0}$ to be the fiber product $(\cJ_{0}\cX)\times_{X}\Spec\kappa$
with respect to the composition morphism $\cJ_{0}\cX\to\cX\to X$
and the point $0\in X(\kappa)$. The stack $(\cJ_{0}\cX)_{0}$ is
a closed substack of $\cJ_{0}\cX$. We denote its coarse moduli space
by $\overline{(\cJ_{0}\cX)_{0}}$, which is a scheme finite over $\kappa$.
For an algebraic closure $\bar{\kappa}$ of $\kappa$, we have an
identification 
\[
\overline{(\cJ_{0}\cX)_{0}}(\bar{\kappa})=(\cJ_{0}\cX)_{0}(\bar{\kappa})/\cong.
\]
Here the right hand side is the set of isomorphism classes of morphisms
$\Spec\bar{\kappa}\to(\cJ_{0}\cX)_{0}.$ 

Identifying $\Spec\bar{\kappa}$ with the origin of $\AA_{\bar{\kappa}}^{n}$,
we can identify $[\Spec\bar{\kappa}/\Gamma]$ with a closed substack
of $\cX\otimes_{\kappa}\bar{\kappa}$. Then $\overline{(\cJ_{0}\cX)_{0}}(\bar{\kappa})$
is identified with the isomorphism classes of representable morphisms
over $\kappa$
\[
[\Spec\bar{\kappa}/\mu_{l}]\to[\Spec\bar{\kappa}/\Gamma]
\]
for $l\mid\sharp\Gamma$. Here $\mu_{l}$ is the group of $l$th roots
of unity in $\bar{\kappa}$ and later identified with the group scheme
$\mu_{l,\bar{\kappa}}$. Since the automorphism groups of the $\bar{\kappa}$-points
of these quotient stacks are $\mu_{l}$ and $\Gamma$ respectively,
the morphism gives a homomorphism 
\[
\mu_{l}\to\Gamma,
\]
which is injective because the stack morphism is representable. Now
two representable morphisms $[\Spec\bar{\kappa}/\mu_{l}]\rightrightarrows[\Spec\bar{\kappa}/\Gamma]$
are isomorphic if and only if the corresponding maps $\mu_{l}\rightrightarrows\Gamma$
are conjugate to each other in $\Gamma$. Let $\Conj{\mu_{l},\Gamma}$
be the set of injective homomorphisms modulo conjugation in $\Gamma$.
We thus have a natural one-to-one correspondence
\[
\overline{(\cJ_{0}\cX)_{0}}(\bar{\kappa})\leftrightarrow\bigsqcup_{l\mid\sharp\Gamma}\Conj{\mu_{l},\Gamma}.
\]

We fix a primitive $\sharp\Gamma$-th root $\zeta\in\bar{\kappa}$
so that we obtain canonical choices of generators of $\mu_{l}$ for
all $l\mid\sharp\Gamma$, that is, $\zeta_{l}:=\zeta^{\sharp\Gamma/l}$.
Then we can identify an injective map $\mu_{l}\to\Gamma$ with the
image of $\zeta_{l}$ in $\Gamma$, and $\bigsqcup_{l\mid\sharp\Gamma}\Conj{\mu_{l},\Gamma}$
with $\Conj{\Gamma}$. Thus we obtain a one-to-one correspondence
\[
\overline{(\cJ_{0}\cX)_{0}}(\bar{\kappa})\leftrightarrow\Conj{\Gamma}.
\]

For later use, we would like to know what subset of $\Conj{\Gamma}$
corresponds to the subset of $\kappa$-points, $\overline{(\cJ_{0}\cX)_{0}}(\kappa)\subset\overline{(\cJ_{0}\cX)_{0}}(\bar{\kappa})$,
when $\kappa$ is finite. If $q$ is the cardinality of a finite field
$\kappa$, then we put 
\[
\cC_{\Gamma,q}:=\{[g]\in\Conj{\Gamma}\mid[g]=[g^{q}]\}\subset\Conj{\Gamma}.
\]

\begin{lem}
Suppose $\kappa=\FF_{q}$. By the correspondence $\overline{(\cJ_{0}\cX)_{0}}(\bar{\kappa})\leftrightarrow\Conj{\Gamma}$,
the subset $\overline{(\cJ_{0}\cX)_{0}}(\kappa)$ corresponds to \textup{$\cC_{\Gamma,q}$. }\end{lem}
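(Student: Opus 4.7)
The plan is straightforward Galois descent. Since $\overline{(\cJ_{0}\cX)_{0}}$ is a scheme of finite type over $\kappa=\FF_{q}$, its $\kappa$-points coincide with the $\mathrm{Gal}(\bar\kappa/\kappa)$-fixed subset of $\overline{(\cJ_{0}\cX)_{0}}(\bar\kappa)$. So the task splits into two: first, transport the Galois action through the identification $\overline{(\cJ_{0}\cX)_{0}}(\bar\kappa)\leftrightarrow\Conj{\Gamma}$ constructed above; second, check the fixed set equals $\cC_{\Gamma,q}$. I expect the first step to be the main obstacle, because the identification rested on a non-canonical choice of primitive root $\zeta\in\bar\kappa$, which is exactly what Frobenius disturbs, so one has to keep careful bookkeeping of source versus target in the transport-of-structure formula.

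For the first step, recall that a $\bar\kappa$-point of $\overline{(\cJ_{0}\cX)_{0}}$ corresponds to the $\Gamma$-conjugacy class of an injective group-scheme homomorphism $\phi\colon\mu_{l,\bar\kappa}\hookrightarrow\Gamma$, sent to $[\phi(\zeta_{l})]\in\Conj{\Gamma}$ with $\zeta_{l}=\zeta^{\sharp\Gamma/l}$. Both source and target descend from group schemes over $\kappa$: $\Gamma_{\kappa}$ is constant, so $\mathrm{Gal}(\bar\kappa/\kappa)$ acts trivially on its $\bar\kappa$-points, whereas on $\mu_{l,\bar\kappa}\subset\bar\kappa^{\times}$ the Galois group acts by raising to the $q$-th power. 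By transport of structure, the Frobenius transform of $\phi$ is $\sigma\cdot\phi=\sigma_{\Gamma}\circ\phi\circ\sigma_{\mu_{l}}^{-1}$, whose value on $\zeta_{l}$ is $\phi(\zeta_{l}^{q^{-1}})=g^{q^{-1}}$, where $g=\phi(\zeta_{l})$ and $q^{-1}$ is the multiplicative inverse modulo $l$ (legitimate because $\gcd(q,\sharp\Gamma)=1$ in the tame setting of this subsection). The induced Galois action on $\Conj{\Gamma}$ is therefore $[g]\mapsto[g^{q^{-1}}]$.

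The second step is then purely group-theoretic. A class $[g]$ is fixed iff $g$ is $\Gamma$-conjugate to $g^{q^{-1}}$; writing $g=h\,g^{q^{-1}}h^{-1}$ and raising both sides to the $q$-th power yields $g^{q}=hgh^{-1}$, and the converse is identical, so the two conditions are equivalent. Hence the Galois-fixed elements of $\Conj{\Gamma}$ are precisely those $[g]$ with $[g]=[g^{q}]$, which is the defining condition for $\cC_{\Gamma,q}$, completing the argument.
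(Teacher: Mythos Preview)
Your proof is correct and follows essentially the same strategy as the paper: identify the Frobenius/Galois action on $\overline{(\cJ_{0}\cX)_{0}}(\bar\kappa)\leftrightarrow\Conj{\Gamma}$ and read off the fixed set. The only difference is a harmless convention---the paper precomposes with the Frobenius of $[\Spec\bar\kappa/\mu_l]$ directly and obtains the self-map $[g]\mapsto[g^{q}]$, whereas your transport-of-structure formula carries the inverse and yields $[g]\mapsto[g^{q^{-1}}]$; your final paragraph correctly shows these two maps have the same fixed classes, so the conclusions agree.
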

\begin{proof}
For a scheme $U$ over $\FF_{q}$, we denote by $F_{U}$ the Frobenius
morphism $U\to U$ defined by the sheaf morphism $\cO_{U}\to\cO_{U}$,
$f\mapsto f^{q}$. We define a self-map $F$ of $\overline{(\cJ_{0}\cX)_{0}}(\bar{\kappa})$
by 
\[
F(x):=x\circ F_{\Spec\bar{\kappa}}=F_{\overline{(\cJ_{0}\cX)_{0}}}\circ x.
\]
The set of $\kappa$-points, $\overline{(\cJ_{0}\cX)_{0}}(\kappa)$,
is the fixed point locus $\overline{(\cJ_{0}\cX)_{0}}(\bar{\kappa})^{F}$
of $F$. It suffices to show that the self-map $F$ of $\overline{(\cJ_{0}\cX)_{0}}(\bar{\kappa})$
corresponds to the self-map of $\cC_{\Gamma,q}$ given by $[g]\mapsto[g^{q}]$.
Note that Deligne-Mumford stacks over $\kappa$ also have Frobenius
morphisms, since for any \'{e}tale morphism $V\to U$ of $\kappa$-schemes,
$F_{V}$ is the base change of $F_{U}$. Let $x\in\overline{(\cJ_{0}\cX)_{0}}(\bar{\kappa})$
be a point corresponding to a morphism 
\[
\alpha:[\Spec\bar{\kappa}/\mu_{l}]\to[\Spec\bar{\kappa}/\Gamma]
\]
and to a homomorphism
\[
\beta:\mu_{l}\to\Gamma.
\]
Then $F(x)$ corresponds to the composition of $\alpha$ and the Frobenius
morphism of $[\Spec\bar{\kappa}/\mu_{l}]$, and to the composite map
of groups 
\[
\mu_{l}\xrightarrow{F_{\mu_{l}}}\mu_{l}\xrightarrow{\beta}\Gamma,
\]
where $F_{\mu_{l}}$ is the Frobenius morphism of the group scheme
$\mu_{l}=\mu_{l,\bar{\kappa}}$. Since $F_{\mu_{l}}$ sends $\xi\in\mu_{l}$
to $\xi^{q}$, the lemma follows.\end{proof}
\begin{cor}
If $\kappa$ contains all $\sharp\Gamma$-th roots of unity, then
$\overline{(\cJ_{0}\cX)_{0}}(\kappa)=\overline{(\cJ_{0}\cX)_{0}}(\bar{\kappa})$
and we have a one-to-one correspondence
\[
\{\text{connected components of }\overline{(\cJ_{0}\cX)_{0}}\}\leftrightarrow\overline{(\cJ_{0}\cX)_{0}}(\kappa)\leftrightarrow\Conj{\Gamma}.
\]
\end{cor}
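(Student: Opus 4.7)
The plan is to show that the finite $\kappa$-scheme $\overline{(\cJ_{0}\cX)_{0}}$ decomposes as a disjoint union of copies of $\Spec\kappa$, one for each conjugacy class of $\Gamma$; the three bijections of the corollary then follow at once.

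First, I would base change to $\bar{\kappa}$ and use that $\cJ_{0}\cX$ is a twisted form of the inertia stack $I\cX$ (as noted in the remark preceding the lemma). The standard decomposition
\[
I\cX\otimes_{\kappa}\bar{\kappa} \;=\; \bigsqcup_{[g]\in\Conj{\Gamma}}\bigl[(\AA_{\bar{\kappa}}^{n})^{g}/C_{\Gamma}(g)\bigr],
\]
after pulling back along $0\in X(\bar{\kappa})$ and passing to coarse moduli, yields
\[
\overline{(\cJ_{0}\cX)_{0}}\otimes_{\kappa}\bar{\kappa} \;\cong\; \bigsqcup_{[g]\in\Conj{\Gamma}}\Spec\bar{\kappa},
\]
a reduced scheme whose connected components, closed points and $\bar{\kappa}$-points all coincide and are naturally indexed by $\Conj{\Gamma}$, consistently with the bijection constructed in the paragraphs preceding the lemma.

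Next, I would argue that $G_{\kappa}:=\Gal(\bar{\kappa}/\kappa)$ acts trivially on this decomposition, so that the disjoint union descends to $\overline{(\cJ_{0}\cX)_{0}}=\bigsqcup_{[g]}\Spec\kappa$. The argument is a routine adaptation of the proof of the preceding lemma, with an arbitrary $\sigma\in G_{\kappa}$ playing the role of the Frobenius of $\FF_{q}$. A point corresponding to an injective $\beta:\mu_{l}\hookrightarrow\Gamma$ is sent by $\sigma$ to the composite $\mu_{l}\xrightarrow{\sigma|_{\mu_{l}}}\mu_{l}\xrightarrow{\beta}\Gamma$, since the $G_{\kappa}$-action on a $\bar{\kappa}$-point factors through the action on $\Spec\bar{\kappa}$ and hence on $\mu_{l,\bar{\kappa}}$. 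The hypothesis $\mu_{\sharp\Gamma}\subset\kappa$ forces $\sigma|_{\mu_{l}}=\mathrm{id}_{\mu_{l}}$ for every $l\mid\sharp\Gamma$, so each conjugacy class is fixed by $\sigma$.

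The main (and essentially only) point requiring care is this naturality observation that extends the preceding lemma from the Frobenius of a finite field to an arbitrary Galois element. Once this is granted, the statement $\overline{(\cJ_{0}\cX)_{0}}(\kappa)=\overline{(\cJ_{0}\cX)_{0}}(\bar{\kappa})$ is immediate, and the identification with connected components is a formal consequence of the reduced decomposition over $\bar{\kappa}$ together with triviality of the descent datum.
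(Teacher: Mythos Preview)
Your argument is correct, but it takes a somewhat different route from the paper's. The paper's proof is shorter and more direct: it implicitly keeps the finite-field hypothesis $\kappa=\FF_{q}$ from the preceding lemma, observes that ``$\kappa$ contains all $\sharp\Gamma$-th roots of unity'' forces $\sharp\Gamma\mid q-1$, hence $g^{q}=g$ for every $g\in\Gamma$, so the self-map $[g]\mapsto[g^{q}]$ on $\Conj{\Gamma}$ is the identity and $\cC_{\Gamma,q}=\Conj{\Gamma}$. The preceding lemma then gives $\overline{(\cJ_{0}\cX)_{0}}(\kappa)=\overline{(\cJ_{0}\cX)_{0}}(\bar{\kappa})\leftrightarrow\Conj{\Gamma}$ immediately, and the connected-component statement follows since a finite $\kappa$-scheme whose $\bar{\kappa}$-points are all $\kappa$-rational has its associated reduced scheme equal to a disjoint union of copies of $\Spec\kappa$.

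What you do instead is upgrade the Frobenius computation of the preceding lemma to a statement about the full $G_{\kappa}$-action on $\overline{(\cJ_{0}\cX)_{0}}(\bar{\kappa})$, identifying it with the action $\sigma\cdot\beta=\beta\circ\sigma|_{\mu_{l}}$ on embeddings $\mu_{l}\hookrightarrow\Gamma$; the hypothesis $\mu_{\sharp\Gamma}\subset\kappa$ then makes this action trivial. This is a genuine generalization: your proof works for an arbitrary perfect field $\kappa$, whereas the paper's proof, as written, uses the cardinality $q$ and so only treats the finite-field case. The trade-off is that the paper's argument is a one-line application of the lemma already in hand, while yours requires redoing (in mild generality) the naturality analysis that the lemma carried out for Frobenius. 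Both are valid; yours buys generality, the paper's buys brevity.
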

\begin{proof}
Let $q$ be the cardinality of $\kappa$. The condition that $\kappa$
contains all $\sharp\Gamma$-th roots of unity implies that $\sharp\Gamma$
divides $q-1$. Therefore, for any $g\in\Gamma$, we have $g^{q-1}=1$.
It follows that the self-map $[g]\mapsto[g^{q}]$ of $\Conj{\Gamma}$
is the identity map. This shows the correspondence $\overline{(\cJ_{0}\cX)_{0}}(\kappa)\leftrightarrow\Conj{\Gamma}$
and the equality $\overline{(\cJ_{0}\cX)_{0}}(\kappa)=\overline{(\cJ_{0}\cX)_{0}}(\bar{\kappa})$.
Hence the associated reduced scheme of $\overline{(\cJ_{0}\cX)_{0}}(\kappa)$
is the union of $\sharp\Conj{\Gamma}$ copies of $\Spec\kappa$, which
proves the other correspondence.
\end{proof}
We define a function $\bw_{\tau}$ on $\overline{(\cJ_{0}\cX)_{0}}(\bar{\kappa})$
as the function corresponding to $\bw_{\tau}$ on $\Conj{\Gamma}$
through the correspondence above. 
\begin{rem}
\label{rem: sht}The resulting map $\bw_{\tau}:\overline{(\cJ_{0}\cX)_{0}}(\bar{\kappa})\to\QQ$
is the same as a restriction of the function $\mathrm{sht}:|\cJ_{0}\cX|\to\QQ$
defined in \cite[Section 3.9]{MR2271984} up to an involution of $\overline{(\cJ_{0}\cX)_{0}}(\bar{\kappa})$.
The involution corresponds the involution $[g]\leftrightarrow[g^{-1}]$
of $\Conj{\Gamma}$, and originates in the fact that tangent spaces
are considered in the cited paper, while we are implicitly considering
cotangent spaces. However the involution does not affect the counting
problem, and can be safely ignored.
\end{rem}
The value $\bw_{\tau}$ of this function actually depends only on
the connected component of $\overline{(\cJ_{0}\cX)_{0}}$ to which
the given point belongs. Therefore we write it as $\bw_{\tau}(C)$
for the connected component $C$.
\begin{thm}[\cite{MR2271984}]
\label{thm: McKay tame 2}Let $\kappa$ be an arbitrary perfect field.
Suppose that $\sharp\Gamma$ is prime to the characteristic of $\kappa$
and that $\Gamma\subset\GL n{\kappa}$ has no pseudo-reflection. For
a crepant resolution $f:Y\to X$, we have
\[
[f^{-1}(0)]=\sum_{C\subset\overline{(\cJ_{0}\cX)_{0}}}[C]\cdot\LL^{\bw_{\tau}(C)}\in\mathfrak{R}_{0}^{1/\sharp\Gamma}.
\]
\end{thm}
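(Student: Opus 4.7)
The plan is to deduce the theorem directly from the motivic McKay correspondence of \cite{MR2271984}, which is proved there over an arbitrary perfect base field under the same tameness and no-pseudo-reflection hypotheses used here; the present statement is essentially a restatement of that result, localized over the origin of $X$ and rephrased in terms of $\bw_\tau$ rather than $\mathrm{sht}$.

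First I would recall the global formula from \cite{MR2271984}: under the standing hypotheses, and for a crepant resolution $f \colon Y \to X$, the stacky motivic change-of-variables formula together with the explicit computation of the contribution of twisted $0$-jets yields an identity
\[
[Y] = \int_{\cJ_0 \cX} \LL^{\mathrm{sht}} \quad \in \mathfrak{R}^{1/\sharp\Gamma},
\]
where $\mathrm{sht}$ is the weight function of \cite[\S 3.9]{MR2271984} discussed in Remark \ref{rem: sht}. Crepancy is exactly what kills the relative canonical correction $\LL^{-K_{Y/X}}$ on the left; the right-hand side is a finite sum because $\cJ_0^l\cX$ is empty unless $l \mid \sharp\Gamma$.

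Next I would restrict this identity to the fiber over the closed point $0 \in X(\kappa)$. Compatibility of motivic integration with proper base change, applied to the resolution $f \colon Y \to X$ on the left and to the composite $\cJ_0 \cX \to \cX \to X$ on the right, yields
\[
[f^{-1}(0)] = \int_{(\cJ_0 \cX)_0} \LL^{\mathrm{sht}}.
\]
The stack $(\cJ_0 \cX)_0$ is supported at the single stacky point $[\mathrm{pt}/\Gamma]$, so its coarse moduli $\overline{(\cJ_0 \cX)_0}$ is a finite $\kappa$-scheme. The function $\mathrm{sht}$ is locally constant on the associated reduced scheme, and the integral therefore collapses to the finite sum $\sum_C [C] \cdot \LL^{\mathrm{sht}(C)}$ over the connected components $C$ of $\overline{(\cJ_0 \cX)_0}$.

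The main obstacle is the bookkeeping in the final identification, where $\mathrm{sht}$ must be replaced by $\bw_\tau$ of Definition \ref{def: weight matrix}. The two functions are built from tangent and cotangent data respectively, and as recorded in Remark \ref{rem: sht} they differ by the involution of $\Conj{\Gamma}$ sending $[g]$ to $[g^{-1}]$. This involution is induced by the $\kappa$-automorphism of $\cJ_0\cX$ coming from inversion on the cyclotomic groups $\mu_l$, so it permutes the connected components of $\overline{(\cJ_0\cX)_0}$ and preserves their motivic classes in $\mathfrak{R}_0^{1/\sharp\Gamma}$. Consequently the sum $\sum_C [C] \cdot \LL^{\mathrm{sht}(C)}$ is unchanged when $\mathrm{sht}$ is replaced by $\bw_\tau$, and the theorem follows. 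Once this compatibility is verified, no further geometric input beyond \cite{MR2271984} is required.
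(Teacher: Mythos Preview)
Your overall plan---reduce to the transformation rule of \cite{MR2271984} and then identify $\mathrm{sht}$ with $\bw_\tau$ up to the involution $[g]\leftrightarrow[g^{-1}]$---is the right one, and your treatment of that last identification matches Remark~\ref{rem: sht}. The gap is the ``restriction'' step. The global identity $[Y]=\int_{\cJ_0\cX}\LL^{\mathrm{sht}}$ you invoke is an equality of elements of $\mathfrak{R}^{1/\sharp\Gamma}$; it carries no structure over $X$ along which one could apply anything like proper base change. One cannot specialize an equality in a Grothendieck-type ring to a closed point any more than one can specialize $[\AA^1]=[\GG_m]+1$. So the passage from your displayed global identity to $[f^{-1}(0)]=\int_{(\cJ_0\cX)_0}\LL^{\mathrm{sht}}$ is not justified as written.

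The paper closes this gap by never writing a global identity at all: it works directly over the origin from the outset. Concretely, it introduces the ideal sheaf $\cG\subset\cO_X$ defined by $\mathrm{Im}\bigl((\Omega_X^d)^{\otimes r}\to\cO_X(rK_X)\bigr)=\cG\cdot\cO_X(rK_X)$ and sets
\[
M:=\int_{\pi_0^{-1}(0)}\LL^{\frac{1}{r}\mathrm{ord}\,\cG}\,d\mu_X,
\]
an integral over arcs of the \emph{singular} variety $X$ based at $0$. Then the change-of-variables formula \cite[Theorem~66]{MR2271984} is applied twice: once to the crepant map $f\colon Y\to X$, where crepancy gives $\bigl(\tfrac{1}{r}\mathrm{ord}\,\cG\bigr)\circ f_\infty=\mathrm{ord}\,\Jac_f$ and hence $M=[f^{-1}(0)]$; and once to the stack morphism $\cX\to X$, yielding $M=\sum_C[C]\cdot\LL^{\bw_\tau(C)}$. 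The paper also points out (in the remark following the proof) that quoting \cite[Corollary~72]{MR2271984} directly would require a resolution of $Y\times_X\cX$, unavailable in positive characteristic; computing $M$ in two separate steps through the arc space of $X$ is exactly what sidesteps that obstruction, and this point is invisible in your sketch.
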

\begin{proof}
For a positive integer $r$ with $rK_{X}$ Cartier (for instance,
$r=\sharp\Gamma$), we define an ideal sheaf $\cG\subset\cO_{X}$
by 
\[
\mathrm{Im}((\Omega_{X}^{d})^{\otimes r}\to\cO_{X}(rK_{X}))=\cG\cdot\cO_{X}(rK_{X}).
\]
Let $J_{\infty}X$ be the arc space of $X$, parameterizing morphisms
$\Spec\kappa[[t]]\to X$, and $\pi_{0}:J_{\infty}X\to X$ the projection.
We define an invariant by a motivic integral as follows: 
\[
M:=\int_{\pi_{0}^{-1}(0)}\LL^{\frac{1}{r}\mathrm{ord}\,\cG}\, d\mu_{X}.
\]
Let $h$ be either $f$ or the morphism $\cX\to X$. Since $h$ is
a crepant resolution (in the category of Deligne-Mumford stacks),
a standard argument (see the proof of \cite[Proposition 90]{MR2271984})
shows that 
\[
\left(\frac{1}{r}\mathrm{ord}\,\cG\right)\circ h_{\infty}=\mathrm{ord}\,\mathrm{Jac}_{h},
\]
where $h_{\infty}$ is the map of (twisted) arc spaces associated
to $h$ and $\Jac_{h}$ is the Jacobian ideal sheaf of $h$ on $Y$
or $\cX$. Applying the transformation rule \cite[Theorem 66]{MR2271984}
to $f$, we get
\begin{align*}
M & =\int{}_{\pi_{0}^{-1}(f^{-1}(0))}\LL^{\left(\frac{1}{r}\mathrm{ord}\,\cG\right)\circ f_{\infty}-\mathrm{ord}\,\mathrm{Jac}_{f}}\, d\mu_{Y}\\
 & =\int{}_{\pi_{0}^{-1}(f^{-1}(0))}\, d\mu_{Y}=[f^{-1}(0)].
\end{align*}
Let $\pi$ be the projection $|\cJ_{\infty}\cX|\to|\cX|$ from the
space of twisted arcs of $\cX$ to the point set of $\cX$ and $\fs_{\cX}$
is the composition $|\cJ_{\infty}\cX|\to|\cJ_{0}\cX|\xrightarrow{\mathrm{sht}}\QQ$
(see Remark \ref{rem: sht}). Applying the transformation rule to
the morphism $\cX\to X$, which we denote by $p$, we get 
\begin{align*}
M & =\int{}_{\pi^{-1}(0)}\LL^{\left(\frac{1}{r}\mathrm{ord}\,\cG\right)\circ p_{\infty}-\mathrm{ord}\,\mathrm{Jac}_{p}+\fs_{\cX}}\, d\mu_{\cX}\\
 & =\int_{\pi^{-1}(0)}\LL^{\fs_{\cX}}\, d\mu_{\cX}=\sum_{C\subset\overline{(\cJ_{0}\cX)_{0}}}[C]\cdot\LL^{\bw_{\tau}(C)}.
\end{align*}
We have proved the theorem.\end{proof}
\begin{rem}
If the fiber product $Y\times_{X}\cX$ has a resolution of singularities,
then the theorem follows from \cite[Corollary 72]{MR2271984}. However,
since we are working in arbitrary characteristic and do not know the
existence of resolution, we avoided to use it. 
\end{rem}
When $\kappa$ contain all $\sharp\Gamma$-th roots of unity, then
$\overline{(\cJ_{0}\cX)_{0}}$ is the disjoint union of $\sharp\Conj{\Gamma}$
copies of $\Spec\kappa$, and we obtain Theorem \ref{thm: McKay tame 1}.
Applying the point counting realization $\sharp:\mathfrak{R}_{0}^{1/\sharp\Gamma}\to\RR$,
we obtain:
\begin{cor}
\label{cor: counting tame 1}Let $\kappa$ be a finite field. Suppose
that $\sharp\Gamma$ is prime to the characteristic of $\kappa$ and
that $\Gamma\subset\GL n{\kappa}$ has no pseudo-reflection. For a
crepant resolution $f:Y\to X$, we have
\[
\sharp f^{-1}(0)(\kappa)=\sum_{[g]\in\cC_{\Gamma,q}}q^{\bw_{\tau}(g)}.
\]

\end{cor}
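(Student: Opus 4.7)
The plan is to deduce the corollary as a direct consequence of Theorem \ref{thm: McKay tame 2} by applying the point counting realization $\sharp:\mathfrak{R}_{0}^{1/\sharp\Gamma}\to\RR$ recalled in Section 5.1. Under $\sharp$, the class $[f^{-1}(0)]$ on the left hand side of the motivic identity maps to $\sharp f^{-1}(0)(\kappa)$, which matches the left hand side of the claimed equality. Thus the whole task is to identify the image under $\sharp$ of the right hand side with $\sum_{[g]\in\cC_{\Gamma,q}}q^{\bw_{\tau}(g)}$.

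First I would exploit the fact that, since $\sharp\Gamma$ is prime to the characteristic of $\kappa$, the coarse moduli scheme $\overline{(\cJ_{0}\cX)_{0}}$ is finite étale over $\kappa$. Each connected component $C$ then has the form $\Spec L_{C}$ for a finite separable extension $L_{C}/\kappa$, so $\sharp C(\kappa)$ equals $1$ if $L_{C}=\kappa$ and $0$ otherwise. Applying $\sharp$ term by term, this reduces the right hand side to a sum over those components $C$ with $L_{C}=\kappa$, with contribution $q^{\bw_{\tau}(C)}$ apiece.

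The remaining step is to match these surviving terms with $\cC_{\Gamma,q}$. Here I would invoke the lemma stated just before the corollary, which identifies $\overline{(\cJ_{0}\cX)_{0}}(\kappa)$ with $\cC_{\Gamma,q}$. Under this identification, each $\kappa$-point lies in a unique component $C$ with $L_{C}=\kappa$, and each such component contributes exactly one $\kappa$-point; since $\bw_{\tau}$ is constant on connected components, its value $\bw_{\tau}(C)$ coincides with $\bw_{\tau}(g)$ for the class $[g]\in\cC_{\Gamma,q}$ corresponding to that $\kappa$-point. Summing produces the desired equality.

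I do not foresee any real obstacle: once one accepts the motivic McKay formula, the description of the $\kappa$-points of $\overline{(\cJ_{0}\cX)_{0}}$, and the ring-homomorphism property of the realization map $\sharp$, the corollary is essentially a bookkeeping exercise. The closest thing to a subtle point is the invariance of $\bw_{\tau}$ on connected components (needed in order for $\bw_{\tau}(C)$ to be well defined), but this has already been established in the paragraph preceding Theorem \ref{thm: McKay tame 2}.
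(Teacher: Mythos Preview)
Your proposal is correct and follows exactly the route the paper takes: the paper's proof is literally the one-line remark ``Applying the point counting realization $\sharp:\mathfrak{R}_{0}^{1/\sharp\Gamma}\to\RR$, we obtain:'' preceding the statement, and your write-up simply makes explicit the bookkeeping behind that sentence (using the lemma identifying $\overline{(\cJ_{0}\cX)_{0}}(\kappa)$ with $\cC_{\Gamma,q}$ and the constancy of $\bw_\tau$ on components). One minor remark: you need not assert that $\overline{(\cJ_{0}\cX)_{0}}$ is \emph{\'etale} over $\kappa$; finiteness suffices, since over the perfect field $\kappa$ the reduced subscheme of each connected component is automatically $\Spec$ of a finite separable extension, and $\sharp$ only sees $\kappa$-points.
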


\subsection{The wild case}

A conjectural generalization of the McKay correspondence to the wild
case was formulated by Yasuda \cite{Yasuda:2013fk}, after studying
the special case of a cyclic group of prime order \cite{MR3230848}.
Actually it was a generalization not only to the wild case but also
the relative setting over the integer ring of a local field. Let $K$
be a local field and $\tau:\Gamma\to\GL n{\cO_{K}}$ a faithful representation
of a finite group $\Gamma$. Note that a representation $\Gamma\to\GL n{\kappa}$
as in the last subsection is considered as a special case by composing
it with the inclusion map $\GL n{\kappa}\hookrightarrow\GL n{\kappa[[t]]}$.
We can similarly construct the quotient scheme $X:=\AA_{\cO_{K}}^{n}/\Gamma$.
We can define the canonical divisor $K_{X}$ over $\cO_{K}$ (see
\cite[page 8]{MR3057950}), which is $\QQ$-Cartier. For a proper
birational morphism $f:Y\to X$ with $Y$ smooth over $\cO_{K}$,
we can say that $f$ is a \emph{crepant resolution} if $K_{Y}=f^{*}K_{X}$. 

We need a conjectural moduli space of (not pointed) \'etale $\Gamma$-torsors
of $\Spec K$: we denote it by $\Gamma\text{-}\mathrm{Cov}(K)$. The
base field of the space should be not $K$ but $\kappa$. In the tame
case, $\Gamma\text{-}\mathrm{Cov}(K)$ is expected to be zero dimensional.
Moreover, if $\kappa$ is algebraically closed, then it should consist
of $\sharp\Conj{\Gamma}$ points. On the other hand, in the wild case,
the space is expected to be infinite dimensional. A (not pointed)
$\Gamma$-torsor over $\Spec K$ corresponds to a continuous homomorphism
$G_{K}\to\Gamma$ modulo conjugation in $\Gamma$. Through this correspondence,
we can define a function 
\[
\bw_{\tau}:\Gamma\text{-}\mathrm{Cov}(K)\to\QQ
\]
corresponding to the function $\bw_{\tau}$ on $S_{K,\Gamma}$. We
expect that this function has finite dimensional constructible subsets
as fibers (at least when $X$ admits a crepant resolution) and expressions
$[\bw_{\tau}^{-1}(r)]$, $r\in\QQ$ make sense as elements of $\mathfrak{R}^{1/\sharp\Gamma}$.
The motivic integral of $\LL^{\bw_{\tau}}:\Gamma\text{-}\mathrm{Cov}(K)\to\mathfrak{R}^{1/r}$
is then defined as
\[
\int_{\Gamma\text{-}\mathrm{Cov}(K)}\LL^{\bw_{\tau}}:=\sum_{r\in\QQ}[\bw_{\tau}^{-1}(r)]\LL^{r}\in\mathfrak{R}^{1/\sharp\Gamma}\cup\{\infty\}.
\]

The following conjecture was formulated in \cite{Yasuda:2013fk},
under the assumption that the residue field $\kappa$ is algebraically
closed. 
\begin{conjecture}
\label{conj: McKay motivic}Suppose that for every $g\in\Gamma\setminus\{1\}$,
the fixed point locus $(\AA_{\cO_{K}}^{n})^{g}$ has codimension at
least two, equivalently that the quotient map $\AA_{\cO_{K}}^{n}\to X$
is \'etale in codimension one. Let $f:Y\to X$ be a crepant resolution
and $Z\subset Y$ be the preimage of the origin $0\in X(\kappa)$.
Then 
\[
[Z]=\int_{\Gamma\text{-}\mathrm{Cov}(K)}\LL^{\bw_{\tau}}\in\mathfrak{R}_{0}^{1/\sharp\Gamma}.
\]
\end{conjecture}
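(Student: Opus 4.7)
The plan is to transport the motivic-integration strategy of Theorem \ref{thm: McKay tame 2} to the wild, relative setting, computing a single motivic integral in two ways. Choose a positive integer $r$ with $rK_X$ Cartier and define the fractional ideal $\cG \subset \cO_X$ via $\mathrm{Im}((\Omega_X^n)^{\otimes r} \to \cO_X(rK_X)) = \cG\cdot\cO_X(rK_X)$. With $J_\infty X$ the (mixed-characteristic) arc space of $X$ and $\pi_0 : J_\infty X \to X$ the projection, consider
\[
M := \int_{\pi_0^{-1}(0)} \LL^{\frac{1}{r}\mathrm{ord}\,\cG}\, d\mu_X
\]
as the invariant to compute twice. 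Since $r^{-1}\mathrm{ord}\,\cG$ measures the discrepancy of any resolution, applying the transformation rule along the crepant $f:Y\to X$ gives $(r^{-1}\mathrm{ord}\,\cG)\circ f_\infty = \mathrm{ord}\,\mathrm{Jac}_f$, hence $M = [f^{-1}(0)] = [Z]$, exactly as in the tame argument.

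The substantive work lies in evaluating $M$ through the quotient morphism $p:\cX := [\AA_{\cO_K}^n/\Gamma] \to X$. In the wild case $p$ is no longer \'etale, and twisted arcs of $\cX$ must carry wild ramification data. One needs to interpret $\Gamma\text{-}\mathrm{Cov}(K)$ as a moduli space whose $\bar\kappa$-points correspond to continuous homomorphisms $G_K \to \Gamma$ modulo conjugation in $\Gamma$, together with enough structure to make the natural map from the preimage of $0$ in the space of twisted arcs of $\cX$ motivically measurable, and to prove a wild transformation rule of the form
\[
\left(r^{-1}\mathrm{ord}\,\cG\right)\circ p_\infty - \mathrm{ord}\,\mathrm{Jac}_p + \fs_{\cX} = 0,
\]
with the shift $\fs_{\cX}$ reducing pointwise to the weight $\bw_\tau$ of Definition \ref{def: weight}, as computed from the tuning submodule $\Xi_\rho$ appearing in the definition of $\bv_\tau$. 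Granting these, $M = \int_{\pi^{-1}(0)} \LL^{\fs_{\cX}}\, d\mu_{\cX} = \int_{\Gamma\text{-}\mathrm{Cov}(K)} \LL^{\bw_\tau}$, closing the argument.

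The main obstacle, and the reason the statement remains conjectural, is constructing $\Gamma\text{-}\mathrm{Cov}(K)$ as an ind-constructible $\kappa$-object supporting motivic integration when $p \mid \sharp\Gamma$: the wild stratum is infinite-dimensional, and one expects a parametrization by Artin-Schreier-Witt or $(\varphi,\Gamma)$-module data. A workable route is to stratify $\Gamma\text{-}\mathrm{Cov}(K)$ by the upper ramification filtration, handle the tame stratum via Theorem \ref{thm: McKay tame 2}, and reduce the $p$-primary piece to the cyclic case of prime order treated in \cite{MR3230848}, which already supplies the wild transformation rule in that situation; the general case should then be assembled through the extension $1 \to P \to \Gamma \to \Gamma/P \to 1$ with $P$ a Sylow $p$-subgroup, using the convertibility of $\bw_\bullet$ proved in Section \ref{sec:Weights}. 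Convergence of the resulting integral in $\mathfrak{R}_0^{1/\sharp\Gamma}$ should follow from the codimension-two fixed-locus hypothesis, which forces $\bw_\tau$ to grow with the Artin conductor of $\rho$ and provides the required decay. Consistency against the permutation-representation analysis of Section 4 and the Hilbert-scheme application (Theorem \ref{thm:main intro}) would then confirm the proposal on the known cases.
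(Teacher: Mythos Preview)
The statement is labeled a \emph{Conjecture} in the paper and is not proved there. The paper only offers heuristic motivation in the remarks immediately following it: the conjecture was derived in \cite{Yasuda:2013fk} as a consequence of a \emph{conjectural} change of variables formula for the morphism $[\AA_{\cO_K}^n/\Gamma]\to X$, with the weight $\bw_\tau$ arising as the shift term computed from the tuning submodule via the untwisting technique. Your proposal is not a proof either but a roadmap, and you say so explicitly. In that sense your outline matches the paper's own heuristic exactly: evaluate a single motivic integral in two ways, once through the crepant $f$ (yielding $[Z]$) and once through $p:\cX\to X$ (yielding the $\Gamma\text{-}\mathrm{Cov}(K)$ integral), with everything hinging on a wild transformation rule and on a construction of $\Gamma\text{-}\mathrm{Cov}(K)$ as a $\kappa$-object admitting motivic measure. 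Both ingredients remain open in the paper, so there is nothing further to compare.

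Two technical caveats on your suggested reductions. First, the exact sequence $1\to P\to\Gamma\to\Gamma/P\to 1$ with $P$ a Sylow $p$-subgroup is not generally available, since $P$ need not be normal; and even when it is, the case settled in \cite{MR3230848} is only $\Gamma=\ZZ/p\ZZ$, not an arbitrary $p$-group, so the assembly step is substantially more than bookkeeping via convertibility of $\bw_\bullet$. Second, your convergence remark has the direction of growth reversed in its phrasing: by Definition~\ref{def: weight} and Theorem~\ref{thm:permutation}, $\bw_\tau=\bar\bt_\tau-\bv_\tau$ becomes arbitrarily \emph{negative} as the Artin conductor grows, which is indeed what makes $\LL^{\bw_\tau}$ decay in $\mathfrak{R}^{1/\sharp\Gamma}$; but decay alone does not guarantee that the limit lands in the subsemiring $\mathfrak{R}_0^{1/\sharp\Gamma}$ asserted in the conjecture, and the codimension-two hypothesis does not obviously deliver that refinement.
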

\begin{rem}
One difficulty in formulating the conjecture was to find out the correct
function $\bw_{\tau}$. In \cite{Yasuda:2013fk}, the function was
defined only when $\kappa$ is algebraically closed. Studying a relation
with the Artin conductor, we realized that we need to use $T_{\rho^{\nr}}$
rather than $T_{\rho}$ for Definition \ref{def: residual}. 
\begin{rem}
In \cite{Yasuda:2013fk}, the conjecture above was derived as a direct
consequence of a conjectural change of variables formula of motivic
integrals for the morphism $[\AA_{\cO_{K}}^{n}/\Gamma]\to X$, just
as in the tame case. The weight function $\bw_{\tau}$ appears in
the change of variables formula as necessary shifts coming from twists
of twisted arcs. To compute shifts, Yasuda used a technique which
he calls \emph{untwisting }and which reduces a study of twisted arcs
to one of ordinary arcs. The tuning submodule $\Xi_{\rho}$ is\emph{
}a key ingredient in this technique. This is why it appears in the
definition of the weight.
\begin{rem}
Even if a (crepant) resolution of $X$ does not exist, invariants
$\int_{\Gamma\text{-}\mathrm{Cov}(K)}\LL^{\bw_{\tau}}$ and $M(K,\Gamma,-\bw_{\tau})$
have an important meaning. Indeed they should be equal to stringy
invariants of the quotient singularity and contain rich information
on the geometry of the singularity. 
\end{rem}
\end{rem}
\end{rem}
As mentioned above, if $\kappa$ is algebraically closed and $\sharp\Gamma$
is prime to its characteristic, then $\Gamma\text{-}\mathrm{Cov}(K)$
consists of finitely many points, which correspond to conjugacy classes
of $\Gamma$. If moreover $K=\kappa((t))$ and $\tau$ is defined
over $\kappa$, then we easily see that the conjecture is reduced
to Theorem \ref{thm: McKay tame 1}. When $K=\kappa((t))$ with $\kappa$
a perfect field of characteristic $p>0$, $\Gamma=\ZZ/p$ and $\tau$
is defined over $\kappa$, then the conjecture holds with $\Gamma\text{-}\mathrm{Cov}(K)$
replaced with a similar parameter space \cite{MR3230848}. 

What is the point counting version of the conjecture? The obvious
choice for the left side of the equality is $\sharp Z(\kappa)$. The
one for the right side is not completely clear. For, there is ambiguity
about what kind of space $\Gamma\text{-}\mathrm{Cov}(K)$ is: which
moduli functor it represents, or whether the moduli space is fine
or coarse. However, a reasonable choice would be 
\begin{equation}
\sum_{\alpha\in\Gamma\text{-}\mathrm{Cov}(K)'}\frac{1}{\sharp\Aut(\alpha)}\cdot q^{\bw_{\tau}(\alpha)},\label{eq: point realization}
\end{equation}
where $\Gamma\text{-}\mathrm{Cov}(K)'$ denotes the set of isomorphism
classes of $\Gamma$-torsors over $\Spec K$ and $\Aut(\alpha)$ is
the automorphism group of a $\Gamma$-torsor $\alpha$. The appearance
of the coefficient $1/\sharp\Aut(\alpha)$ is natural, since it is
customary to count objects with weights inverse proportional to the
order of the automorphism group. 
\begin{lem}
Sum (\ref{eq: point realization}) is equal to $M(K,\Gamma,-\bw_{\tau})$.\end{lem}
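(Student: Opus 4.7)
The plan is to verify the identity by the standard groupoid cardinality / orbit-counting argument that relates a sum over pointed objects (weighted by $1/\sharp\Gamma$) to a sum over isomorphism classes (weighted by reciprocal automorphisms). Recall from the discussion preceding \eqref{eq:given torsor} that the set $S_{K,\Gamma}$ of continuous homomorphisms $\rho : G_K \to \Gamma$ is in bijection with the set of pointed \'etale $\Gamma$-torsors over $\Spec K$, and two such homomorphisms yield isomorphic \emph{unpointed} torsors if and only if they are conjugate under $\Gamma$. Thus there is a natural surjection
\[
\mathrm{pr} : S_{K,\Gamma} \twoheadrightarrow \Gamma\text{-}\mathrm{Cov}(K)'
\]
whose fibers are the $\Gamma$-conjugacy classes in $S_{K,\Gamma}$.

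First, I would check that $\bw_\tau$ is constant on $\Gamma$-conjugacy classes, so that it descends to a function on $\Gamma\text{-}\mathrm{Cov}(K)'$ (which is already implicit in the paper's notation $\bw_\tau(\alpha)$). For $\gamma \in \Gamma$ and $\rho' = \gamma \rho \gamma^{-1}$, the $\Gamma$-torsors $T_\rho$ and $T_{\rho'}$ are isomorphic, so $M_\rho \cong M_{\rho'}$; one can then check directly from the definitions in Section 3 that this isomorphism identifies the tuning submodules $\Xi_\rho^\tau$ and $\Xi_{\rho'}^\tau$ (hence $\bv_\tau$ is conjugation-invariant), and that the stabilizer $\Gamma'$ of a component of $T_{\rho^{\mathrm{nr}}}$ changes only by an inner automorphism (hence $\bar{\bt}_\tau$ is conjugation-invariant). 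Thus $\bw_\tau = \bar{\bt}_\tau - \bv_\tau$ descends to $\Gamma\text{-}\mathrm{Cov}(K)'$.

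Next, I would identify the fibers of $\mathrm{pr}$. For $\rho \in S_{K,\Gamma}$ with corresponding unpointed torsor $\alpha = \mathrm{pr}(\rho)$, the conjugation stabilizer of $\rho$ in $\Gamma$ is the centralizer $Z_\Gamma(\rho(G_K))$, and an elementary argument with pointed vs.\ unpointed torsors identifies this centralizer with the automorphism group $\Aut(\alpha)$ of $\alpha$ as a $\Gamma$-torsor (an automorphism moves the base point to $\gamma \cdot t_0$, which yields an isomorphic pointed torsor precisely when $\gamma$ centralizes $\rho(G_K)$). Consequently
\[
\sharp\,\mathrm{pr}^{-1}(\alpha) = [\Gamma : Z_\Gamma(\rho(G_K))] = \frac{\sharp\Gamma}{\sharp\Aut(\alpha)}.
\]

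With these two ingredients the lemma is immediate: partitioning $S_{K,\Gamma}$ by the fibers of $\mathrm{pr}$ gives
\[
M(K,\Gamma,-\bw_\tau) = \frac{1}{\sharp\Gamma}\sum_{\rho \in S_{K,\Gamma}} q^{\bw_\tau(\rho)}
= \frac{1}{\sharp\Gamma}\sum_{\alpha \in \Gamma\text{-}\mathrm{Cov}(K)'} \frac{\sharp\Gamma}{\sharp\Aut(\alpha)} \cdot q^{\bw_\tau(\alpha)}
= \sum_{\alpha} \frac{q^{\bw_\tau(\alpha)}}{\sharp\Aut(\alpha)},
\]
which is precisely \eqref{eq: point realization}. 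The only mildly nontrivial point is the verification that $\bw_\tau$ is conjugation-invariant; everything else is formal orbit-counting, so I do not expect a substantial obstacle.
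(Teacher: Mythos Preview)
Your argument is correct and is essentially the same as the paper's proof: the paper identifies $\Aut(\alpha)$ with the centralizer $C_\Gamma(H)$ where $H$ is the stabilizer of a connected component of $\alpha$ (equivalently, $H=\rho(G_K)$), and then notes that exactly $\sharp\Gamma/\sharp C_\Gamma(H)$ elements of $S_{K,\Gamma}$ map to $\alpha$, which is precisely your orbit-stabilizer computation. Your version is just more explicit about the conjugation-invariance of $\bw_\tau$ and the pointed/unpointed torsor dictionary, which the paper leaves implicit.
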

\begin{proof}
If $H$ is the stabilizer group of a connected component of a $\Gamma$-torsor
$\alpha$, then $\Aut(\alpha)$ is the opposite group of the centralizer
$C_{\Gamma}(H)$ and there are exactly $\sharp\Gamma/\sharp C_{\Gamma}(H)$
elements of $S_{K,\Gamma}$ corresponding to $\alpha$, which shows
the lemma.
\end{proof}
We pose the following conjecture as the point counting realization
of the last conjecture:
\begin{conjecture}
\label{conj: McKay points}In addition to the assumption of Conjecture
\ref{conj: McKay motivic}, we suppose that the residue field $\kappa$
is finite. Then 
\[
\sharp Z(\kappa)=M(K,\Gamma,-\bw_{\tau}).
\]

\end{conjecture}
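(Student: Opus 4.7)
The plan is to derive the conjecture from its motivic counterpart, Conjecture \ref{conj: McKay motivic}, by applying the point counting realization $\sharp : \mathfrak{R}_0^{1/\sharp\Gamma} \to \RR$ to both sides. The left-hand side gives $\sharp[Z] = \sharp Z(\kappa)$ by the defining property of $\sharp$. For the right-hand side, by the lemma just preceding the conjecture, the sum $\sum_{\alpha} \frac{1}{\sharp \Aut(\alpha)} q^{\bw_\tau(\alpha)}$ over isomorphism classes of $\Gamma$-torsors equals $M(K,\Gamma,-\bw_\tau)$, so it suffices to show that $\sharp$ applied to $\int_{\Gamma\text{-}\mathrm{Cov}(K)} \LL^{\bw_\tau}$ equals this sum. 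This requires (i) identifying the $\kappa$-points of the fibers $\bw_\tau^{-1}(r)$ with the corresponding sets of $\Gamma$-torsors (weighted by automorphisms), and (ii) checking that $\sharp$ extends to the infinite sum defining the motivic integral, which in turn reduces to uniform bounds on the weight $\bw_\tau$ ensuring convergence in $\mathfrak{R}_0^{1/\sharp\Gamma}$.

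In the tame case, where $\sharp\Gamma$ is prime to the residue characteristic, one need not invoke the motivic conjecture: instead, use the proven Theorem \ref{thm: McKay tame 2} and apply $\sharp$ directly, yielding
\[
\sharp Z(\kappa) = \sum_{C \subset \overline{(\cJ_0\cX)_0}} \sharp C(\kappa) \cdot q^{\bw_\tau(C)}.
\]
One then combines the bijection $\overline{(\cJ_0\cX)_0}(\kappa) \leftrightarrow \cC_{\Gamma,q}$ established earlier with the explicit tame description of $\bw_\tau(\rho)$ from Lemma \ref{lem: v tame explicit}, which matches the $\overline{(\cJ_0\cX)_0}$-weight coming from the age formula in Definition \ref{def: weight matrix}. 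The remaining step is to match multiplicities: a $\Gamma$-torsor $\alpha \in \Gamma\text{-}\mathrm{Cov}(K)'$ contributes $\sharp\Gamma / \sharp\Aut(\alpha)$ elements of $S_{K,\Gamma}$, and under tameness the conjugacy class associated to $\alpha$ lies in $\cC_{\Gamma,q}$ precisely when it is $G_\kappa$-stable, so a counting-orbit argument aligns both sides.

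For the wild case, the general argument is blocked by the absence of a proven motivic McKay correspondence, but specific cases admit direct verification. The approach for the $(S_n, \AA_{\cO_K}^{2n})$ Hilbert scheme case (Theorem \ref{thm: Hilb}) is to evaluate each side independently: the left-hand side by the Ellingsrud--Str\o mme cell decomposition giving $\sum_m P(n,n-m) q^m$, and the right-hand side by rewriting $\bw_\tau$ in terms of the Artin conductor via Corollary \ref{cor:comparison twice} (which for the permutation representation $\tau = \sigma$ gives $\bw_{2\sigma} = \bt_\sigma - \bs_\sigma$, hence via balanced decomposition relates $\bw$ to $\ba$) and then invoking Bhargava's refined mass formula together with Lemma \ref{lem:Kedlaya} identifying $\ba_\sigma$ with the valuation of the discriminant. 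Collecting terms by partition type should reproduce the same polynomial with $q$ replaced by $q^{-1}$, cancelled against the Hilbert scheme generating series.

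The principal obstacle is the wild case in full generality: the conjectural moduli space $\Gamma\text{-}\mathrm{Cov}(K)$ is infinite-dimensional and has no established construction, so that the motivic integral itself lacks a rigorous meaning in general. Even granted the motivic conjecture, the commutation of $\sharp$ with the integral requires a careful theory of constructibility and convergence for the fibers of $\bw_\tau$. A plausible attack for special classes (beyond $S_n$) is to find group-theoretic input, analogous to Bhargava's mass formula, that allows an independent evaluation of $M(K,\Gamma,-\bw_\tau)$, and then compare it against a direct geometric computation of $\sharp Z(\kappa)$; making this systematic, and in particular explaining the $q \leftrightarrow q^{-1}$ duality hinted at in the introduction, is the genuinely hard part and is deferred to the sequel \cite{Wood-Yasuda-II}.
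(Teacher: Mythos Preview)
The statement is a \emph{conjecture}, not a theorem: the paper does not prove it in general and does not claim to. Your first paragraph proposes to derive it from Conjecture \ref{conj: McKay motivic}, but that is itself conjectural (and, as you later acknowledge, the moduli space $\Gamma\text{-}\mathrm{Cov}(K)$ is not even rigorously defined in the wild case), so this is not a proof but a reduction of one conjecture to another plus further unproven compatibility assertions about the point-counting realization of the motivic integral. You recognize this obstacle in your final paragraph, so your proposal is really a survey of partial approaches rather than a proof --- which is appropriate, since no proof exists.

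Your outlines of the two special cases the paper does establish are essentially accurate and match the paper's arguments. For the tame case (Corollary \ref{cor: point tame 2}), the paper's route is exactly to apply $\sharp$ to Theorem \ref{thm: McKay tame 2} to get Corollary \ref{cor: counting tame 1}, and then to prove separately (the unlabeled Lemma preceding Corollary \ref{cor: point tame 2}) that $M(K,\Gamma,-\bw_\tau)=\sum_{[g]\in\cC_{\Gamma,q}}q^{\bw_\tau(g)}$; the paper's argument for this lemma uses the explicit presentation of the tame quotient of $G_K$ by generators $a,b$ with the single relation $bab^{-1}=a^q$, which is more concrete than your ``counting-orbit argument'' but amounts to the same thing. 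Note, however, that the paper's tame result requires $K=\kappa((t))$ and $\tau$ defined over $\kappa$, hypotheses you do not mention. For the Hilbert scheme case (Theorem \ref{thm: Hilb}), your sketch --- Ellingsrud--Str\o mme for $\sharp Z(\kappa)$, and Corollary \ref{cor:comparison twice} plus Bhargava's refined formula stratified by partition type for $M(K,S_n,-\bw_{2\sigma})$ --- is precisely the paper's proof.
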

Again, if $K=\kappa((t))$ with $\kappa$ a perfect field of characteristic
$p>0$, $\Gamma=\ZZ/p$ and $\tau$ is defined over $\kappa$, then
Conjecture \ref{conj: McKay points} holds \cite[Corollary 6.28]{MR3230848}. 
\begin{lem}
We have
\[
M(K,\Gamma,-\bw_{\tau})=\sum_{[g]\in\cC_{\Gamma,q}}q^{\bw_{\tau}(g)}.
\]
\end{lem}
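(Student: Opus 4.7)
The lemma lives in the tame regime (both $\cC_{\Gamma,q}$ and $\bw_\tau(g)$ for $g\in\Gamma$ were defined only when $\sharp\Gamma$ is prime to the residue characteristic $p$), so tameness is the implicit hypothesis. My plan is to parametrize $S_{K,\Gamma}$ explicitly via the structure of the tame Galois group, identify $\bw_\tau(\rho)$ with $\bw_\tau(g)$ from Definition~\ref{def: weight matrix}, and then do a bookkeeping exercise using the centralizer formula.

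First, every $\rho\in S_{K,\Gamma}$ factors through the tame quotient $G_K^{t}$, which is topologically generated by a Frobenius lift $\sigma$ and a generator $\tau_0$ of tame inertia, subject to $\sigma\tau_0\sigma^{-1}=\tau_0^{q}$. Hence the map $\rho\mapsto(\rho(\tau_0),\rho(\sigma))$ gives a bijection
\[
S_{K,\Gamma}\;\longleftrightarrow\;\bigl\{(g,h)\in\Gamma^{2}\mid hgh^{-1}=g^{q}\bigr\}.
\]
Next, I would show $\bw_\tau(\rho)$ depends only on $g=\rho(\tau_0)$ and equals $\bw_\tau(g)$ in the sense of Definition~\ref{def: weight matrix}. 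The extension $L_\rho/K$ is cyclic of order $l:=|g|$, and under $\Gal(L_\rho/K)\cong\langle g\rangle$ the image of $\tau_0$ is the distinguished generator. By Lemma~\ref{lem: v tame explicit} and the remark following it, if $g$ has eigenvalues $\zeta_l^{b_1},\dots,\zeta_l^{b_n}$ with $0<b_i\le l$, then $\bw_\tau(\rho)=n-\tfrac{1}{l}\sum b_i$. Rewriting the same eigenvalues as $\zeta^{B_i}$ with $\zeta$ a primitive $\sharp\Gamma$-th root of unity and $1\le B_i\le\sharp\Gamma$, we have $B_i=b_i\cdot\sharp\Gamma/l$, so
\[
\bw_\tau(\rho)=n-\frac{1}{\sharp\Gamma}\sum_{i=1}^{n}B_i=\bw_\tau(g).
\]

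Finally, for a fixed $g\in\Gamma$ the set $\{h\in\Gamma\mid hgh^{-1}=g^{q}\}$ is empty unless $[g]=[g^{q}]$, i.e.\ $[g]\in\cC_{\Gamma,q}$, in which case it is a coset of $C_\Gamma(g)$ and thus has cardinality $\sharp C_\Gamma(g)$. Summing over conjugacy classes and invoking the orbit--stabilizer identity $\sharp[g]\cdot\sharp C_\Gamma(g)=\sharp\Gamma$ yields
\[
M(K,\Gamma,-\bw_\tau)=\frac{1}{\sharp\Gamma}\sum_{[g]\in\cC_{\Gamma,q}}\sharp[g]\cdot\sharp C_\Gamma(g)\cdot q^{\bw_\tau(g)}=\sum_{[g]\in\cC_{\Gamma,q}}q^{\bw_\tau(g)}.
\]
No step is a genuine obstacle; the only subtlety requiring care is the normalization check in the middle paragraph, confirming that Definition~\ref{def: weight matrix} (primitive $\sharp\Gamma$-th roots) and the formula following Lemma~\ref{lem: v tame explicit} (primitive $l$-th roots with $l=|g|$) produce the same rational number.
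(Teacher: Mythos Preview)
Your proposal is correct and follows essentially the same route as the paper's proof: parametrize $S_{K,\Gamma}$ by pairs $(g,h)\in\Gamma^2$ with $hgh^{-1}=g^q$ via the two-generator-one-relation presentation of the tame Galois group, identify $\bw_\tau(\rho)$ with $\bw_\tau(g)$ through Lemma~\ref{lem: v tame explicit}, and then collapse the sum using the centralizer count. Your write-up is in fact slightly more careful than the paper's in two places---you make the normalization check between $l$-th and $\sharp\Gamma$-th roots explicit, and you spell out that $\{h:hgh^{-1}=g^q\}$ is a coset of $C_\Gamma(g)$ rather than just asserting its cardinality---but these are expository refinements, not a different argument.
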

\begin{proof}
The tame absolute Galois group of $\kappa((t))$ is profinitely generated
by two elements, say $a$ and $b$, with exactly one relation: $bab^{-1}=a^{q}$
\cite[page 410]{MR2392026}. Therefore there exists a bijection
\begin{align*}
S_{K,\Gamma} & \to\{(g,h)\in\Gamma^{2}\mid hgh^{-1}=g^{q}\}\\
\rho & \mapsto(\rho(a),\rho(b)).
\end{align*}
Since $a$ profinitely generates the inertia group, from Lemma \ref{lem: v tame explicit},
the map 
\[
S_{K,\Gamma}\to\Conj{\Gamma},\,\rho\mapsto[\rho(a)]
\]
is compatible with the functions both denoted by $\bw_{\tau}$ in
Definitions \ref{def: weight} and \ref{def: weight matrix}. It follows
that 
\[
\begin{split}M(K,\Gamma,-\bw_{\tau}) & =\frac{1}{\sharp\Gamma}\cdot\sum_{(g,h)\in\Gamma^{2}\text{: }hgh^{-1}=g^{q}}q^{\bw_{\tau}(g)}.\end{split}
\]
For each $g\in\Gamma$, there exist exactly $\sharp C_{\Gamma}(g)$
elements $h\in\Gamma$ such that $hgh^{-1}=g^{q}$. Hence 
\[
\begin{split}M(K,\Gamma,-\bw_{\tau}) & =\frac{1}{\sharp\Gamma}\cdot\sum_{g\in\Gamma\text{: }[g]=[g^{q}]}\sharp C_{\Gamma}(g)\cdot q^{\bw_{\tau}(g)}\\
 & =\sum_{[g]\in\cC_{\Gamma,q}}q^{\bw_{\tau}(g)}.
\end{split}
\]

\end{proof}
This lemma and Corollary \ref{cor: counting tame 1} show:
\begin{cor}
\label{cor: point tame 2}Suppose that $\kappa$ is a perfect field
of characteristic prime to $\sharp\Gamma$, that $K=\kappa((t))$,
and that $\tau$ factors through $\GL n{\kappa}$. Then Conjecture
\ref{conj: McKay points} holds.
\end{cor}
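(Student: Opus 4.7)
The plan is to reduce the statement to the absolute tame McKay correspondence already established in Corollary \ref{cor: counting tame 1}, together with the immediately preceding lemma identifying $M(K,\Gamma,-\bw_\tau)$ with $\sum_{[g]\in\cC_{\Gamma,q}}q^{\bw_\tau(g)}$. Since Conjecture \ref{conj: McKay points} is a statement about finite residue fields, $\kappa$ is implicitly finite here.

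First, because $\tau$ factors through $\GL n\kappa$ and $\cO_K=\kappa[[t]]$, the $\Gamma$-action on $\AA_{\cO_K}^n$ is obtained by base change from a $\Gamma$-action on $\AA_\kappa^n$. Hence $X=\AA_{\cO_K}^n/\Gamma$ is identified with $X_\kappa\otimes_\kappa\cO_K$, where $X_\kappa=\AA_\kappa^n/\Gamma$, and the origin $0\in X(\kappa)$ is just the origin of the special fiber $X_\kappa$. Consequently $Z=f^{-1}(0)$ is contained in the special fiber $Y_\kappa$ of $Y$, and in fact $Z=f_\kappa^{-1}(0)$, where $f_\kappa:Y_\kappa\to X_\kappa$ denotes the restriction of $f$.

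Next I would verify that $f_\kappa$ is itself a crepant resolution of $X_\kappa$. Smoothness of $Y_\kappa/\kappa$ follows from the smoothness of $Y/\cO_K$, and birationality descends to the special fiber because both sides are $\cO_K$-flat and the map is an isomorphism on the generic fiber. Crepancy is a straightforward application of adjunction: restricting the relation $K_{Y/\cO_K}=f^*K_{X/\cO_K}$ to $Y_\kappa$ yields $K_{Y_\kappa}=f_\kappa^*K_{X_\kappa}$. The hypothesis that $\AA_{\cO_K}^n\to X$ has no pseudo-reflection descends to $\AA_\kappa^n\to X_\kappa$ for the same base-change reason.

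With these identifications in hand, Corollary \ref{cor: counting tame 1} applied to $f_\kappa:Y_\kappa\to X_\kappa$ yields
\[
\sharp Z(\kappa)=\sharp f_\kappa^{-1}(0)(\kappa)=\sum_{[g]\in\cC_{\Gamma,q}}q^{\bw_\tau(g)},
\]
while the preceding lemma evaluates
\[
M(K,\Gamma,-\bw_\tau)=\sum_{[g]\in\cC_{\Gamma,q}}q^{\bw_\tau(g)}.
\]
Comparing the two sums gives the desired equality. I do not expect a real obstacle here; the only mild verification is the crepancy of $f_\kappa$, which is formal in the tame equicharacteristic setting where no wild contribution to the change-of-variables formula is present.
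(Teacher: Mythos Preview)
Your proposal is correct and follows exactly the paper's (one-sentence) argument: combine the preceding lemma with Corollary \ref{cor: counting tame 1}. You spell out the reduction to the special fiber more carefully than the paper does; one small quibble is that your stated reason for birationality of $f_\kappa$ (``both sides are $\cO_K$-flat and the map is an isomorphism on the generic fiber'') is not sufficient on its own---an isomorphism on the generic fiber need not be birational on the special fiber---but the conclusion holds because a crepant resolution is an isomorphism over the smooth locus of $X$, and here $X_{\sm}=(X_\kappa)_{\sm}\times_\kappa\Spec\cO_K$.
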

We note that a closely related result was proved in \cite{MR2385245}.
\begin{example}
Let $\Gamma=\ZZ/3$, $K$ a local field with a finite residue field
$\kappa=\FF_{q}$ of characteristic $\ne3$, $\tau:\Gamma\to\GL 3{\cO_{K}}$
the regular representation. From \cite[Example 6.1]{MR2354797} and
Corollary \ref{cor:compare tame w a}, we have
\[
M(K,\Gamma,-\bw_{\tau})=\begin{cases}
2q+1 & (q\equiv1\mod3)\\
1 & (q\equiv2\mod3).
\end{cases}
\]

Let $X:=\AA_{\cO_{K}}^{3}$ and $f:Y\to X$ the blowup along the zero
section $\Spec\cO_{K}\subset X$. Then $f$ is a crepant resolution.
Its pull-back by $\Spec\kappa\hookrightarrow\Spec\cO_{K}$, $f_{\kappa}:Y_{\kappa}\to X_{\kappa}$,
depends only on $\kappa$ (independent of $K$). Therefore the corollary
above shows that 
\[
\sharp f^{-1}(0)(\kappa)=M(K,\Gamma,-\bw_{\tau}).
\]

If $q\equiv1\mod3$, then indeed $X$ is a normal toric 3-fold and
isomorphic to the product of the affine toric surface with $A_{2}$-singularity
and the affine line. Hence the exceptional locus of $f_{\kappa}$
is two projective lines meeting at a point, and has $2q+1$ $\FF_{q}$-points.
Conversely, if $q\equiv2\mod3$, then $Z$ has only one $\FF_{q}$-point.
It shows that $Z$ is irreducible. For, if it was not the case, then
an irreducible component would be a Brauer-Severi variety having a
rational point and hence isomorphic to the projective line (see \cite{MR657430}),
which has more than one $\FF_{q}$-point.
\end{example}

\subsection{The punctual Hilbert scheme}

Though it is hardly known when a crepant resolution exists in the
wild case, there is an infinite series of examples. Let $\AA_{R}^{2}$
be an affine plane over a ring $R$. The Hilbert scheme of $n$ points
of $\AA_{R}^{2},$ 
\[
H_{R}:=\Hilb^{n}(\AA_{R}^{2}/\Spec R),
\]
is a projective smooth $R$-scheme of dimension $2n$. Let 
\[
X_{R}:=\AA_{R}^{2}\times_{R}\cdots\times_{R}\AA_{R}^{2}/S_{n}
\]
be the $n$th symmetric product of $\AA_{R}^{2}$. We can also think
of $X$ as the quotient variety associated to $2\sigma:=\sigma\oplus\sigma$
with $\sigma$ the defining representation of $S_{n}$ (over $R$).
Then there is the Hilbert-Chow morphism 
\[
H_{R}\to X_{R}
\]
(for instance, see \cite{MR1440180}). Under a reasonable assumption
on $R$, in particular, if $R=\cO_{K}$ for a local field $K$, then
from \cite{0537.53056,MR1825408,MR2107324}, this is a crepant resolution.
Note that $H_{R}$ and $X_{R}$ are compatible with the base change
by any ring homomorphism $R\to R'$.

The following theorem provides further strong evidence for Conjectures
\ref{conj: McKay motivic} and \ref{conj: McKay points}.
\begin{thm}
\label{thm: Hilb}Suppose that $K$ is a local field with residue
field $\kappa=\FF_{q}$ and put $R:=\cO_{K}$. Then Conjecture \ref{conj: McKay points}
holds for the crepant resolution $H_{R}\to X_{R}$ and the representation
$2\sigma:S_{n}\to\GL{2n}R$. Moreover, if $Z$ is the preimage of
the origin $0\in X(\kappa)$, then we have
\begin{equation}
\sharp Z(\kappa)=M(K,\Gamma,-\bw_{2\sigma})=\sum_{m=0}^{n}P(n,n-m)q^{m}.\label{eq:McKay S_n}
\end{equation}
\end{thm}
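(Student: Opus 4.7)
The plan is to compute each side of $\sharp Z(\kappa) = M(K,S_n,-\bw_{2\sigma})$ in closed form and show both equal $\sum_{m=0}^{n-1} P(n,n-m) q^m$. For the geometric side, I would invoke the affine cell decomposition of the punctual Hilbert scheme $Z$ due to Ellingsrud--Str\o mme: cells are parameterized by partitions $\lambda \vdash n$ (via monomial ideals in $\kappa[x,y]$ supported at the origin) and the $\lambda$-cell is affine of dimension $n-\ell(\lambda)$, so
\[
\sharp Z(\kappa) = \sum_{\lambda \vdash n} q^{n-\ell(\lambda)} = \sum_{m=0}^{n-1} P(n,n-m)\, q^m,
\]
reproducing (\ref{eq:hilb formula intro}).

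For the arithmetic side, Corollary \ref{cor:comparison twice} applied to the defining permutation representation $\sigma$ of $S_n$ yields $\bw_{2\sigma}(\rho) = 2\bt_\sigma(\rho) - \ba_\sigma(\rho)$. By Lemma \ref{lem:Kedlaya}, $\ba_\sigma(\rho) = v_K(d_{E_\rho/K})$ for the associated \'etale $K$-algebra $E_\rho = \prod_i L_i$ of degree $n$, and a direct count of $G_0$-orbits on embeddings gives $\bt_\sigma(\rho) = n - \sum_i f(L_i/K)$. Reindexing the total mass by isomorphism classes of \'etale $K$-algebras,
\[
M(K,S_n,-\bw_{2\sigma}) = \sum_E \frac{q^{2\sum_i (e_i-1)f_i - v_K(d_{E/K})}}{|\Aut(E/K)|}.
\]
The summand is multiplicative under products of fields, so the generating function in $n$ admits the exponential form
\[
\sum_{n \geq 0} M(K,S_n,-\bw_{2\sigma})\, z^n = \exp\Bigl(\sum_{L/K \text{ field}} \frac{q^{2(e_L-1)f_L - v_K(d_{L/K})}}{|\Aut(L/K)|}\, z^{[L:K]}\Bigr).
\]

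I would then evaluate the inner sum by stratifying field extensions $L/K$ of degree $d$ by residue degree $f$: for each $f$ the unramified extension $K_f/K$ of degree $f$ is unique, and $L/K_f$ runs over totally ramified extensions of degree $e=d/f$. Applying Serre's mass formula (the ingredient behind Bhargava's refined formula in \cite{MR2354798}) to extensions of $K_f$, combined with the tower identity $v_K(d_{L/K}) = f\,v_{K_f}(d_{L/K_f})$ and a Galois descent relating $|\Aut(L/K)|$ to $|\Aut(L/K_f)|$ via the orbit of $L/K_f$ under $\Gal(K_f/K) \cong \ZZ/f$, the twist $q^{2(e-1)f}$ converts Serre's $(q^f)^{1-e}$ precisely into $q^{(e-1)f}$. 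The coefficient of $z^d$ in the logarithm of the generating function thus collapses to $\sum_{k \mid d} (k/d)\, q^{(k-1)(d/k)}$, and the formal identity
\[
\exp\Bigl(\sum_{d \geq 1} z^d \sum_{k \mid d} \tfrac{k}{d}\, q^{(k-1)(d/k)}\Bigr) = \prod_{k \geq 1} \frac{1}{1-q^{k-1} z^k} = \sum_{n \geq 0} z^n \sum_{\lambda \vdash n} q^{n-\ell(\lambda)}
\]
identifies the coefficient of $z^n$ as $\sum_{m=0}^{n-1} P(n,n-m)\, q^m$, matching $\sharp Z(\kappa)$.

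The main obstacle is the uniform treatment across tame and wild ramification: Serre's formula holds in both regimes, but one must check that the Galois descent and the $q^{2(e-1)f}$ twist combine coherently when the residue characteristic divides $n$, so that the inner sum collapses exactly as stated irrespective of wild breaks. This coherence is what Bhargava's refinement in \cite{MR2354798} supplies, and once it is in place the rest is a clean exponential-formula manipulation. A subsidiary concern is the bookkeeping for the $\Gal(K_f/K)$-orbit sizes of totally ramified extensions, but these contribute a clean factor $1/f$ that is built into the residue-degree stratification.
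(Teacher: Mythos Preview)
Your proposal is correct; both sides are computed accurately and the generating-function manipulation is valid (the reindexing $k=e$, $d/k=f$ turns the log into $\sum_{k}\sum_{f}(q^{k-1}z^{k})^{f}/f$, exponentiating to $\prod_{k}(1-q^{k-1}z^{k})^{-1}$ as you claim). The geometric side is handled exactly as in the paper.

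On the arithmetic side, however, your route differs from the paper's. The paper does not pass through generating functions or reprove anything from Serre's formula. Instead it invokes Bhargava's \emph{refined} mass formula directly: for each partition $\bp$ of $n$ into $n-m$ parts, the mass of $\rho$ with $\bp(\rho)=\bp$ weighted by $q^{-\ba_{\sigma}(\rho)}$ equals $q^{-m}$. Since $\bt_{\sigma}(\rho)=m$ is \emph{constant} on each partition stratum, the twist $q^{2\bt_{\sigma}}$ simply multiplies by $q^{2m}$, and the whole computation collapses to four lines: $\sum_{\bp}q^{2(n-\sharp\bp)}\cdot q^{-(n-\sharp\bp)}=\sum_{\bp}q^{n-\sharp\bp}$. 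Your approach, by contrast, feeds the twist into the exponential formula and reduces to Serre's formula over each $K_{f}$; this is essentially a re-derivation of (the twisted form of) Bhargava's refinement rather than a citation of it. The paper's argument is shorter and makes transparent \emph{why} the twist works---$\bt_{\sigma}$ depends only on the partition type---whereas your argument is more self-contained and exhibits the multiplicative structure explicitly.

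One remark: your stated ``main obstacle'' about uniform tame/wild treatment is not actually an obstacle in your own argument. Serre's mass formula is already uniform in tame and wild degrees, and the Galois descent step only involves the unramified subextension $K_{f}/K$, so nothing wild enters there. You do not need Bhargava's refinement as an additional input to make the wild case work---your Serre-plus-exponential argument already handles it cleanly.
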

\begin{proof}
From \cite{MR870732} (see also \cite{MR2492446} for the case of
an arbitrary base field), $Z$ is stratified into finitely many affine
spaces:
\[
Z=\bigsqcup_{i=1}^{l}\AA_{\kappa}^{n_{i}}.
\]
Moreover, for each $0\le m\le n-1$, there are exactly $P(n,n-m)$
strata of dimension $m$. In particular, we have
\begin{equation}
\sharp Z(\kappa)=\sum_{m=0}^{n-1}P(n,n-m)q^{m}.\label{eq:number points partitions}
\end{equation}

Let $E=\prod_{i=1}^{l}E_{i}$ be an \'etale extension of degree $n$
with $E_{i}$ fields and let $e_{i}$ and $f_{i}$ be the ramification
and inertia indices of $E_{i}/K$. Then we have the partition of $n$
into exactly $\sum f_{i}$ parts, 
\[
n=\overset{f_{1}\text{ times}}{\overbrace{e_{1}+\cdots+e_{1}}}+\cdots+\overset{f_{l}\text{ times}}{\overbrace{e_{l}+\cdots+e_{l}}}.
\]
We will denote the partition by $\bp(E)$. When $\rho\in S_{K,S_{n}}$
corresponds to $E$, we put $\bp(\rho):=\bp(E)$. Then Bhargava's
formula \cite[Proposition 2.2]{MR2354798} is written as follows,
according to Kedlaya's interpretation: for a partition $\bp$ of $n$
into exactly $n-m$ parts, 
\[
\frac{1}{\sharp\Gamma}\sum_{\substack{\rho\in S_{K,S_{n}}\\
\bp(\rho)=\bp
}
}q^{-\ba_{\sigma}(\rho)}=q^{-m}.
\]
On the other hand, for $\rho$ with $\bp(\rho)=\bp$, 
\[
\bt_{\sigma}(\rho)=m.
\]
Indeed, the induced \'etale extension $E^{\nr}/K^{\nr}$ is isomorphic
to $\prod_{i}L_{i}^{f_{i}}$ with $L_{i}/K$ a field extension of
ramification index $e_{i}$. Then the $G_{K^{\nr}}$-action on $\{1,\dots,n\}$
induced by $\rho$ has $\sum f_{i}$ orbits. Since $\sum f_{i}=n-m$,
we have 
\[
\bt_{\sigma}(\rho)=\codim\,(\kappa)^{G_{K^{\nr}}}=m.
\]

If we denote by $\sharp\bp$ the number of parts of $\bp$, then,
from Corollary \ref{cor:comparison twice}, we have 
\begin{align*}
M(K,S_{n},-\bw_{2\sigma}) & =\frac{1}{n!}\sum_{\bp}\sum_{\rho:\bp(\rho)=\bp}q^{\bw_{2\sigma}(\rho)}\\
 & =\frac{1}{n!}\sum_{\bp}\sum_{\rho:\bp(\rho)=\bp}q^{2\bt_{\sigma}(\rho)-\ba_{\sigma}(\rho)}\\
 & =\sum_{\bp}q^{n-\sharp\bp}\\
 & =\sum_{m=0}^{n-1}P(n,n-m)q^{m}.
\end{align*}
We have proved the theorem.
\end{proof}
\bibliographystyle{alpha}
\bibliography{/Users/highernash/Dropbox/Math_Articles/mybib}

\end{document}